\documentclass[10pt,reqno]{amsart}
\numberwithin{equation}{section}

\usepackage{amssymb}
\usepackage{enumerate, xspace}
\usepackage{marvosym} 
\usepackage[sans]{dsfont} 
\usepackage{bbm}
\usepackage{changebar}
\usepackage[colorlinks]{hyperref}

\usepackage[usenames,dvipsnames,svgnames,table]{xcolor}

\usepackage{ulem}
\usepackage{setspace}

\hfuzz=15pt


\newtheorem{thm}{Theorem}[section]
\newtheorem{lem}[thm]{Lemma}

\newtheorem{prop}[thm]{Proposition}

\newtheorem{definition}[thm]{Definition}

\newtheorem{rem}[thm]{Remark}



\newcommand\bZ{{\mathbb Z}}

\newcommand\ve{\varepsilon}



\newcommand\LL{{\mathbb L}}
\newcommand\EE{{\mathbb E}}
\newcommand\PP{{\mathbb P}}

\newcommand\RR{{\mathbb R}}

\newcommand\ZZ{{\mathbb Z}}



\newcommand{\mc}[1]{{\mathcal #1}}

\newcommand{\bb}[1]{{\mathbb #1}}

\begin{document}

\title[Fractional Fick's law]{Fractional Fick's law for the boundary driven exclusion process with long jumps}
\author{C.Bernardin}
\address{Universit\'e C\^ote d'Azur, CNRS, LJAD\\
Parc Valrose\\
06108 NICE Cedex 02, France}
\email{{\tt cbernard@unice.fr}}

\author{B. Jim\'enez Oviedo}
\address{Universit\'e C\^ote d'Azur, CNRS, LJAD\\
Parc Valrose\\
06108 NICE Cedex 02, France}
\email{{\tt byron@unice.fr}}
\thanks{}

\date{\today.}
\begin{abstract} A fractional Fick's law and fractional hydrostatics for the one dimensional  exclusion process with long jumps in contact with infinite reservoirs at different densities on  the left and on the right are derived.
\end{abstract}

\keywords{Fick's law, Exclusion Process with long jumps, Fractional diffusion, Superdiffusion} 
\subjclass{60K35, 82C22, 35R11, 60G51}

\maketitle

\section{Introduction}

The exclusion process is known as the ``Ising model" of non-equilibrium statistical mechanics and since its introduction in the 70's in biophysics by MacDonald et al. (\cite{MacDo1,MacDo2}) and in probability by Spitzer \cite{Spitzer}, a lot of papers in the mathematical physics literature focused on it because it captures the main features of more realistic diffusive systems driven out of equilibrium (\cite{Li}, \cite{Liggett2}, \cite{Spohn}). The exclusion process is an interacting particle system consisting of a collection of continuous-time dependent random walks moving on the lattice $\ZZ$:  A particle at $x$ waits an exponential time and then chooses to jump to $x+y$ with probability  $p(y)$.  If, however, $x+y$ is already occupied, the jump is suppressed and the clock is reset. In this paper we are interested in the case where $p(\cdot)$ has a long tail, proportional to $|\cdot|^{-(1+\gamma)}$ for $\gamma>1$. Curiously it is only very recently that the investigation of the exclusion process with long jumps started (\cite{BGS,Jara1, Jara2, GJ2,Seth, SNU}).    

Our motivation for this study is threefold. First, due to the intense activity developed around the exclusion process since its introduction almost fifty years ago, it is very natural to investigate on the differences and the similarities between the {\textit{finite jumps}} exclusion process and the {\textit{long jumps}} exclusion process. Our second motivation is related to the field of anomalous diffusion in one dimensional chains of oscillators (\cite{D, LLP, Sp2}). Recent studies suggest that the macroscopic behavior of some chains of oscillators (with short range interactions) displaying anomalous diffusion should be similar to the macroscopic behavior of the symmetric exclusion process with long jumps. In order to motivate this claim, let us observe that the equilibrium fluctuations of a harmonic chain with energy-momentum conservative noise and of the long jumps exclusion process with exponent $\gamma=3/2$ are the same (\cite{Jara1, Jara2, BGJ, BGJSS,JKO2}). See also Remark \ref{rem:rem1} of this paper for a second example. These similarities can be roughly understood by the fact that in $1d$ chains of oscillators, the energy carriers, the phonons, do not behave like interacting Brownian particles but like interacting Levy walks (\cite{DSS,DS} and \cite{DKZ} for a review on Levy walks). Therefore, we believe that the symmetric exclusion process with long jumps could play the role of a simple effective model to investigate properties of superdiffusive chains of oscillators. Our third motivation, which is related to the second but has also its own interest, is to develop a {\textit{macroscopic fluctuation theory}} for superdiffusive systems (e.g. exclusion process with long jumps) as it has been done during the last decade by Bertini et al. (\cite{BDGGL}) for diffusive systems. The key idea behind the macroscopic fluctuation theory is that the non-equilibrium free energy of a particular given system  depends only on its macroscopic behavior and not on its microscopic details. Therefore, two models macroscopically identical  shall have the same non-equilibrium free energie. As explained above our hope is that some superdiffusive chains of oscillators and exclusion processes with long jumps have the same macroscopic behavior and hence the same non-equilibrium free energy.  

In this paper we consider the symmetric exclusion process with long jumps in contact with two reservoirs with different densities at the boundaries. We show that in the non-equilibrium stationary state the average density current scales with the length $N$ of the system as $N^{-\delta}$, $0<\delta<1$. We also show that the stationary density profile is described by the stationary solution of a fractional diffusion equation with Dirichlet boundary conditions. Observe that in a diffusive regime, $\delta=1$ and that the stationary profile is the stationary solution of a usual diffusion equation with Dirichlet boundary conditions. Similar conclusions to ours, as well as extensions to the asymmetric case, have been obtained in a non-rigorous physics paper by J. Szavits-Nossan and K. Uzelac (\cite{SNU}). As a final remark of this introduction let us observe that in our paper, as well as in \cite{SNU}, the reservoirs are described by infinite reservoirs. This has the advantage to avoid a truncation of the long range transition probability $p(\cdot)$. However other reservoirs descriptions are possible but we conjecture that they could have a quantitative effect on the form of the stationary profile. Indeed, since the fractional Laplacian is a non-local operator, the fractional Laplacian with Dirichlet boundary conditions can be interpreted in several ways giving rise to different stationary solutions. The (microscopic) description used for the reservoirs fix the (macroscopic) interpretation of the fractional Laplacian with Dirichlet boundary conditions. In our case it is the so called ``restricted fractional Laplacian" which appears. This sensitivity to the form of the reservoirs is due to the presence of long jumps and does not appear for the exclusion process with short jumps. This sensitivity has also been observed in models of (non interacting) Levy walks and in the context of $1d$ superdiffusive chains of oscillators (\cite{LP}).  
   
The paper is organized as follows. In Section \ref{sec:model}  we describe precisely the model studied and the results obtained. In Section \ref{sec:wsfhe} we recall basic facts on the fractional Laplacian and explain what we mean by stationary solution of a fractional diffusion equation with Dirichlet boundary conditions. Section \ref{sec:Proofs} is devoted to the proofs of the results with some technical lemmata postponed to the Appendix.  
   
\section{Model and Results}
\label{sec:model}
We consider a symmetric long jumps exclusion process on $\Lambda_N=\{1, \ldots, N-1\}$, $N \ge 2$, in contact with two reservoirs at density $\alpha \in (0,1)$ on the left and density $\beta \in(0,1)$ on the right. Let $p(\cdot)$ be a probability function on $\ZZ$ which takes the form
$$p(z)= \cfrac{c_{\gamma}}{|z|^{1+\gamma}}, \quad |z| \ge 1,\quad p(0)=0,$$
where $2>\gamma >1$ and $c_\gamma>0$ is a normalization factor. If $\gamma\ge 2$ the boundary driven long jumps symmetric exclusion process has {\textit{mutatis mutandis}} the same behavior as the usual  boundary driven finite jumps exclusion process {\footnote{For $\gamma=2$ the diffusive scaling has to be replaced by a diffusive scaling with some logarithmic corrections but the system behaves macroscopically in a diffusive way (\cite{Jara1}, Appendix A). }}. The configuration space of the process is $\Omega_N = \{ 0, 1\}^{\Lambda_N}$ and a typical configuration $\eta$ is denoted as a sequence $(\eta_z)_z$ indexed by $z\in \Lambda_N$. The generator of the boundary driven symmetric long jumps exclusion process $\{ \eta (t) \; ; \; t \ge 0\}$ is defined by
\begin{equation}
L_N = L^0_N  + L_N^{r} + L_N^{\ell} 
\end{equation}
where for any $f:\Omega_N \to \RR$ 
\begin{equation}
\begin{split}
(L^0_N f)(\eta) &=\sum_{x,y \in \Lambda_N} p(x-y) \eta_x (1-\eta_y) [ f(\eta^{xy}) -f(\eta)] \\
&= \cfrac{1}{2} \, \sum_{x,y \in \Lambda_N} p(x-y) [ f(\eta^{xy}) -f(\eta)],\\
(L_N^{r} f)(\eta)&= \sum_{x \in \Lambda_N, y \ge N} p(x-y) [ \eta_x (1-\beta) + (1-\eta_x) \beta]  [f(\eta^x) - f(\eta)],\\
(L_N^{\ell} f)(\eta) &= \sum_{x \in \Lambda_N, y \le 0} p(x-y) [ \eta_x (1-\alpha) + (1-\eta_x) \alpha]  [f(\eta^x) - f(\eta)].
\end{split}
\end{equation}
Here the configurations $\eta^x$ and $\eta^{xy}$ are defined by
\begin{equation*}
(\eta^{xy})_z = 
\begin{cases}
\eta_z, \; z \ne x,y,\\
\eta_y, \; z=x,\\
\eta_x, \; z=y
\end{cases}
, \quad (\eta^x)_z= 
\begin{cases}
\eta_z, \; z \ne x,\\
1-\eta_x, \; z=x.
\end{cases}
\end{equation*}
Sometimes it will be useful to consider a configuration $\eta \in \Omega_N$ as a configuration on $\{0,1,\alpha,\beta\}^{\ZZ}$ by extending $\eta$ by setting $\eta_x =\alpha$ for $x\le 0$ and $\eta_x=\beta$ for $x\ge N$. Observe that the reservoirs add and remove particles on all the sites of the lattice $\Lambda_N$, and not only at the boundaries, but with rates which decrease as the distance from the corresponding reservoir increases. The same kind of reservoirs is used in \cite{SNU}. 

The bulk dynamics (i.e. without the presence of the reservoirs) conserves the number of particles. Let $W_x$, $x=1, \ldots, N$, be defined by
\begin{equation}
\begin{split}
&W_x= \sum_{y \le x-1} \sum_{z\ge x} p(z-y) [\eta_y -\eta_z] \, + (\beta- \alpha) \sum_{\substack{y\le 0\\ z \ge N}} p(z-y).
\end{split}
\end{equation}
In this formula, as explained above, we adopted the convention $\eta_z= \alpha$ for $z \le 0$ and $\eta_z =\beta$ for $z\ge N$. It can be checked that since $\gamma>1$, these quantities are well defined. Observe that the quantity $W_x$ is equal to 
\begin{equation*}
\begin{split}
W_x&=\sum_{1 \le y\le x-1 <z\le N-1} \; p(z-y) [\eta_y (1-\eta_z) -\eta_z (1-\eta_y) ] \\
&+\sum_{x \le z \le N-1} \sum_{y\le 0} p (z-y) (\alpha -\eta_z) -\sum_{1 \le y\le x-1} \sum_{z \ge N} p(z-y) (\beta-\eta_y).
\end{split}
\end{equation*}
It corresponds to the rate of particles jumping in the bulk by crossing $x-1/2$ from the left to the right minus the rate of particles jumping in the bulk by crossing $x-1/2$ from the right to the left (first sum) plus the rate of particles coming from the left reservoir by crossing $x-1/2$ (second sum) minus the rate of particles coming from the right reservoir by crossing $x-1/2$ (third sum). Then for any $x\in \Lambda_N$ we have the following microscopic continuity equation
\begin{equation}
\label{eq:continuity}
L_N \eta_x = -\nabla W_x := -(W_{x+1} -W_{x}). 
\end{equation}

\begin{rem}
Observe that for each $x \in \Lambda_N$, the current due to the bulk dynamics $\sum_{y \le x-1} \sum_{z\ge x} p(z-y) [\eta_y -\eta_z]$ can be written as a sum of discrete gradients $\sum_k \alpha_x (k) ( \eta_{k+1} -\eta_k)$. Therefore, it belongs to the class of so-called ``gradient models" (see \cite{KL} for more explanations). However the function $\alpha_x (\cdot)$ is not exponentially localized around $x$ so that the model is quite different from a standard ``gradient model". 
\end{rem}

Let us denote by $\mu_N$ the unique invariant measure of $\{ \eta (t) \; ;\; t \ge 0\}$. If $\alpha=\beta=\rho$ then $\mu_N$ is equal to the Bernoulli product measure with density $\rho$. It is denoted by $\nu_{\rho}$. The expectation of a function $f$ with respect to $\mu_N$ (resp. $\nu_{\rho}$) is denoted by $\langle f \rangle_{N}$ (resp. $\langle f \rangle_\rho$) or ${\mu_N} (f)$ (resp. $\nu_\rho (f)$). For any $\rho \in (0,1)$ the density of $\mu_N$ with respect to $\nu_{\rho}$ is denoted by $f_{N, \rho}$.

Let ${\bar \rho}$ be the unique weak solution (see Section \ref{sec:wsfhe} for a precise definition ) of the stationary fractional heat equation with Dirichlet boundary conditions
 \begin{equation}
 \label{eq:fhed}
 \begin{cases}
 &(-\Delta)^{\gamma/2} \, {\bar \rho} (q) =0, \quad q \in (0,1),\\
 &{\bar \rho} (0)=\alpha,\\
 &{\bar \rho}(1)=\beta.
 \end{cases}
 \end{equation}
We have that (see \cite{BB})
\begin{equation}
\label{eq:explicit}
\forall q \in (0,1), \quad 
{\bar \rho}(q) = \int_{\vert y-\frac{1}{2} \vert>\frac{1}{2}}g(y)\; P_{\tfrac{1}{2}} \big(q-\tfrac{1}{2},y-\tfrac{1}{2}\big)\; dy,
\end{equation}
where the function $g$ is given by
\[ g(y) =
  \begin{cases}
    \alpha \quad \text{if} \quad y<0, \\
    \beta \quad \text{if} \quad y>1,\\
    0 \quad \text{otherwise,}
  \end{cases}
\] 
and the Poisson kernel $P_r (\cdot -\theta, \cdot - \theta)$, $r>0, \theta \in \RR$, is defined by 
$$P_{r}(q-\theta,y-\theta) = C_{\gamma} \left[ \dfrac{r^{2}-(q-\theta)^{2}}{(y-\theta)^{2}-r^{2}} \right]^{\frac{\gamma}{2}}\vert q-y \vert^{-1},$$ for $\vert q-\theta\vert<r,\vert y-\theta\vert>r$ and equal to $0$ elsewhere. Here $C_\gamma$ is a normalization constant equal to $C_\gamma= \Gamma(1/2)  \pi^{-3/2} \sin(\pi \gamma /2)$.
It can be shown that the function ${\bar \rho}$ is smooth in the bulk but only $\gamma/2$-H\"older at the boundaries.

Our first result is the hydrostatic behavior for the boundary driven exclusion process with long jumps, stated in the following theorem.
\begin{thm}
\label{thm:weak-profile}
Let $\gamma \in (1,2)$. For any continuous function $H:[0,1] \to \RR$ we have that
\begin{equation*}
\lim_{N \to \infty} \cfrac{1}{N-1} \sum_{z=1}^{N-1} H(\tfrac{z}{N}) \eta_z = \int_0^1 H(q) {\bar \rho }(q) dq
\end{equation*}
in probability under $\mu_N$.
\end{thm}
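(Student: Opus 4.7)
The plan is to prove convergence in $L^2(\mu_N)$, which suffices for convergence in probability. Writing $\pi_N(H) = \frac{1}{N-1}\sum_{z=1}^{N-1} H(z/N)\eta_z$, the task splits into showing (i) $\mu_N(\pi_N(H)) \to \int_0^1 H(q)\bar\rho(q)\,dq$ and (ii) $\mathrm{Var}_{\mu_N}(\pi_N(H)) \to 0$. For step (i), I would exploit stationarity to produce a discrete Dirichlet problem for the one-point function $\rho_N(z) := \mu_N(\eta_z)$. Since $L_N\eta_z$ is affine in $\eta$, applying $\mu_N$ to the identity $\mu_N(L_N \eta_z)=0$ yields
\[
\sum_{y \in \ZZ,\, y \neq z} p(y-z)\bigl[\bar\rho_N(y) - \rho_N(z)\bigr] = 0, \qquad z \in \Lambda_N,
\]
where $\bar\rho_N$ extends $\rho_N$ by $\alpha$ on $(-\infty,0]$ and by $\beta$ on $[N,\infty)$. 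Thus $\rho_N$ is discrete fractional-harmonic with prescribed Dirichlet data.

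The probabilistic representation $\rho_N(z) = \bE_z[g(X_\tau)]$, where $X$ is the continuous-time random walk on $\ZZ$ with jump kernel $p(\cdot)$, $\tau$ its first exit time from $\Lambda_N$, and $g$ the boundary function $\alpha \cdot \Id_{(-\infty,0]} + \beta \cdot \Id_{[N,\infty)}$, delivers step (i) as follows. Under the spatial rescaling $z = \lfloor Nq \rfloor$, the family $\{X_{N^\gamma t}/N\}$ converges in law to a symmetric $\gamma$-stable L\'evy process $Y$; the exit distribution from $\{1,\dots,N-1\}$ converges to that of $Y$ from $(0,1)$, so $\rho_N(\lfloor Nq\rfloor) \to \bE_q[g(Y_\sigma)] = \bar\rho(q)$ pointwise on $(0,1)$, recognizing \eqref{eq:explicit} as exactly the stable exit formula. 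Since $0 \le \rho_N \le 1$, bounded convergence then upgrades this to the Riemann-sum limit $\frac{1}{N-1}\sum_z H(z/N)\rho_N(z) \to \int_0^1 H\bar\rho$.

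For step (ii), set $V_N(z,z') := \mu_N(\eta_z\eta_{z'}) - \rho_N(z)\rho_N(z')$; the variance decomposes as
\[
\mathrm{Var}_{\mu_N}(\pi_N(H)) = \frac{1}{(N-1)^2}\!\!\sum_{z\neq z'}\! H(\tfrac{z}{N})H(\tfrac{z'}{N})V_N(z,z') \;+\; \frac{1}{(N-1)^2}\sum_{z}H(\tfrac{z}{N})^2 \rho_N(z)(1-\rho_N(z)),
\]
and the diagonal term is immediately $O(1/N)$. For the off-diagonal piece I would use that for symmetric exclusion the equation governing two-point correlations closes on itself: under stationarity $V_N$ solves a discrete inhomogeneous elliptic problem on $\Lambda_N^2 \setminus \mathrm{diag}$ with a source produced on the diagonal by the bulk-exchange non-linearity and additional boundary contributions from the reservoirs, morally proportional to $p(z-z')\bigl(\rho_N(z)-\rho_N(z')\bigr)^2$. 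A duality argument with two random walks that coalesce on meeting, combined with bounds on the associated two-particle Green's function, should then yield $\sup_{z \ne z'}|V_N(z,z')| \to 0$, closing the variance estimate.

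The main obstacle is precisely this two-point estimate: because $p(\cdot)$ is heavy-tailed and the reservoirs inject/remove particles at every bulk site with rates that decay only polynomially, the diagonal source terms are genuinely non-local and reach sites arbitrarily far apart, so the classical short-range SEP arguments must be adapted to the non-local operator. The exponent $\gamma \in (1,2)$ controls simultaneously the decorrelation rate and the boundary regularity of $\bar\rho$ (only $\gamma/2$-H\"older at $\{0,1\}$), and these two effects must be tracked jointly in the Green's function bounds in order to obtain a quantitative enough control of $V_N$.
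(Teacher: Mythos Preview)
Your approach is genuinely different from the paper's, and the two strategies illuminate different aspects of the model.

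\textbf{What the paper does.} The paper never computes $\rho_N$ or $V_N$ directly. Instead it (a) proves an a priori bound $\langle W_1\rangle_N = O(N^{1-\gamma})$ on the stationary current and deduces from it a bound on the entropy production (Dirichlet form) of $\mu_N$; (b) uses tightness of the empirical measures $(\pi^N,\hat\pi^N)$ and, via an entropy/Feynman--Kac argument, shows that any limit density $\pi$ lies in a fractional Sobolev space and is therefore H\"older continuous; (c) identifies the boundary values $\pi(0)=\alpha$, $\pi(1)=\beta$ by a variational trick exploiting the reservoir Dirichlet form; (d) tests the generator against tensor products $F\otimes G$ (the Kipnis--Landim--Olla method) to show that $I(u,v)=\EE^*[(\pi(u)-\bar\rho(u))(\pi(v)-\bar\rho(v))]$ is fractional-harmonic in each variable with zero boundary data, hence identically zero. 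No quantitative control on $V_N$ is ever needed.

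\textbf{Where your proposal stands.} Your step (i) is correct and in fact sharper than what the paper obtains: because $L_N\eta_z$ is affine in $\eta$, the one-point equation closes, the exit-distribution representation is valid, and convergence of the rescaled random walk to the $\gamma$-stable process does give pointwise convergence $\rho_N(\lfloor Nq\rfloor)\to\bar\rho(q)$ (the paper only gets this in a weak sense). Your step (ii) is where the substance lies, and two points deserve care. First, the dual of symmetric exclusion is the two-particle \emph{stirring} process (the particles exclude each other), not coalescing random walks; coalescence is the voter-model dual, and the distinction matters for the Green's function estimates you would need. Second, the paper's route shows why aiming for $\sup_{z\ne z'}|V_N(z,z')|\to 0$ may be stronger than necessary and harder than the problem requires: weak convergence of $\hat\pi^N$ to $\bar\rho\otimes\bar\rho$ only needs averaged information, and the KLO identity for $I(u,v)$ extracts exactly that without ever bounding $V_N$ pointwise. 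If you pursue your route, the closed equation for $V_N$ does exist (Lemma~A.1 of the paper gives the needed Leibniz rule), but controlling the long-range two-particle Green's function with absorbing reservoirs at every site is nontrivial and is precisely the work you have flagged as the main obstacle; the paper's entropy/weak-formulation machinery is designed to avoid it.
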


\begin{rem}
\label{rem:rem1}
In \cite{BKO} and \cite{LMP}, a harmonic chain with energy-momentum conservative noise in contact with thermal baths at different temperatures is considered and it is shown that the temperature profile is given by the solution of a fractional heat equation with Dirichlet boundary conditions. In these papers the baths are of Langevin type and the fractional Laplacian which appears is not the ``restricted fractional Laplacian" like in our work but some ``spectral fractional Laplacian". We conjecture that if Langevin baths are replaced by infinite thermal baths then the macroscopic behavior is described by the ``restricted fractional Laplacian".
\end{rem}

Our second result is the following ``fractional Fick's law". 

\begin{thm}
\label{thm:Fick}
Let $\gamma \in (1,2)$ then the following fractional Fick's law holds {\footnote{The RHS of (\ref{eq:fl67}) does not depend on $x$. It can be proved by taking the derivative w.r.t. $x$ of the RHS of (\ref{eq:fl67}) and showing it vanishes thanks to (\ref{eq:fhed}).}} 
 \begin{equation}
 \label{eq:fl67}
 \begin{split}
 \lim_{N \to \infty} N^{\gamma -1} \langle W_1 \rangle_N &=c_{\gamma} \int_{-\infty}^x dy \; \int_{x}^{\infty} \, dz\, \cfrac{{\bar \rho} (y) -{\bar \rho} (z)}{(z-y)^{1+\gamma}} \;+\; \cfrac{c_\gamma}{\gamma (\gamma -1)} (\beta -\alpha)
 \end{split}
 \end{equation}
where ${\bar \rho}: \RR \to [0,1]$ is the unique solution of (\ref{eq:fhed}) and $x$ is arbitrary in $(0,1)$.
 \end{thm}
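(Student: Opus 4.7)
The plan is to use $\mu_N$-stationarity to shift the site at which the current is evaluated, replace $\langle W_1\rangle_N$ by $\langle W_{x_N}\rangle_N$ with $x_N=\lfloor qN\rfloor$ for an arbitrary interior $q\in(0,1)$ (this $q$ will play the role of the free variable $x$ in \eqref{eq:fl67}), and then pass to the continuum limit by Riemann-sum convergence. Applying $\mu_N$ to \eqref{eq:continuity} gives $\langle W_{x+1}\rangle_N=\langle W_x\rangle_N$, so this shift costs nothing. Writing $\rho_N(y)=\mu_N(\eta_y)$ for $1\le y\le N-1$ and extending $\rho_N(y)=\alpha$ for $y\le 0$, $\rho_N(y)=\beta$ for $y\ge N$ (and $\bar\rho$ similarly), the definition of $W_{x_N}$ gives
\begin{equation*}
\langle W_{x_N}\rangle_N = \sum_{y<x_N\le z}p(z-y)[\rho_N(y)-\rho_N(z)] + (\beta-\alpha)\sum_{y\le 0,\,z\ge N}p(z-y).
\end{equation*}
A key hidden tool, obtained by taking expectations in $L_N\eta_x$ at stationarity, is that the extended $\rho_N$ is \emph{discrete-harmonic for $p$}: $\sum_{y\ne x}p(y-x)[\rho_N(y)-\rho_N(x)]=0$ for $x\in\Lambda_N$, with boundary values $\alpha$ on $(-\infty,0]\cap\ZZ$ and $\beta$ on $[N,\infty)\cap\ZZ$. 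This is the microscopic analogue of \eqref{eq:fhed} and yields pointwise control of $\rho_N$ beyond what Theorem \ref{thm:weak-profile} supplies.

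The second, explicit sum is elementary: the number of pairs with $y\le 0,\,z\ge N,\,z-y=k$ equals $k-N+1$ for $k\ge N$, so setting $k=Nu$ gives
\begin{equation*}
N^{\gamma-1}(\beta-\alpha)\sum_{y\le 0,z\ge N}p(z-y) \;\longrightarrow\; c_\gamma(\beta-\alpha)\int_1^\infty (u-1)u^{-(1+\gamma)}du \;=\; \frac{c_\gamma(\beta-\alpha)}{\gamma(\gamma-1)},
\end{equation*}
the explicit constant on the RHS of \eqref{eq:fl67}. For the bulk sum, the change of variables $q_1=y/N$, $q_2=z/N$ gives $N^{\gamma-1}p(z-y)\sim c_\gamma N^{-2}|q_2-q_1|^{-(1+\gamma)}$, so the formal Riemann-sum limit is
\begin{equation*}
c_\gamma\int_{-\infty}^q dq_1\int_q^\infty dq_2 \,\frac{\bar\rho(q_1)-\bar\rho(q_2)}{(q_2-q_1)^{1+\gamma}},
\end{equation*}
which is the first term on the RHS of \eqref{eq:fl67}.

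Justifying this convergence is the main difficulty, since the kernel $|q_2-q_1|^{-(1+\gamma)}$ is non-integrable on the diagonal $q_1=q_2=q$. The plan is to cut at a mesoscopic scale $\epsilon N$: in the far-diagonal region $|y-x_N|+|z-x_N|>\epsilon N$ the rescaled kernel is bounded and continuous, so dominated convergence together with pointwise convergence $\rho_N(y)\to\bar\rho(y/N)$ off the diagonal (obtained from the discrete-harmonic equation plus convergence of the underlying discrete Poisson kernel to the continuous one in \eqref{eq:explicit}) delivers the corresponding piece of the target integral. In the near-diagonal region, interior $C^\infty$-smoothness of $\bar\rho$ on $(0,1)$ gives $|\bar\rho(y/N)-\bar\rho(z/N)|\le C|y-z|/N$, bounding the near-diagonal piece of the \emph{target} integral by $O(\epsilon^{2-\gamma})$, which vanishes as $\epsilon\to 0$ since $\gamma<2$. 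The hard part is the matching discrete near-diagonal estimate on $\rho_N(y)-\rho_N(z)$: one needs a uniform local replacement $\rho_N(y)=\bar\rho(y/N)+o(N^{1-\gamma})$ in a mesoscopic window around $x_N$, obtained by combining the discrete-harmonic equation for $\rho_N$ with interior regularity of its continuum limit $\bar\rho$. Sending $N\to\infty$ and then $\epsilon\to 0$ closes the limit of the bulk sum, and adding the boundary contribution proves \eqref{eq:fl67}. The $x$-independence of the RHS follows from the footnote: differentiating in $x$ produces a multiple of $(-\Delta)^{\gamma/2}\bar\rho(x)$, which vanishes on $(0,1)$ by \eqref{eq:fhed}.
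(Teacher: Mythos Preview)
Your overall strategy is plausible but differs from the paper's, and its central step is not justified. The paper does \emph{not} fix a single site $x_N$ and pass to the limit in $\langle W_{x_N}\rangle_N$. Instead it averages $\langle W_x\rangle_N$ over all $x\in\Lambda_N$ (equation \eqref{eq:WWW}), which turns the singular double sum into an expression of the form $\mu_N(\langle\pi^N,\varphi_N\rangle)+N^{\gamma-1}\theta_N$, where $\varphi_N$ is a one-variable weight converging uniformly on compacts of $(0,1)$ to the integrable function $\varphi(q)=\tfrac{c_\gamma}{\gamma(1-\gamma)}\big[(1-q)^{1-\gamma}-q^{1-\gamma}\big]$. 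The point of the averaging is that it removes the diagonal singularity entirely: passing to the limit then requires only the \emph{weak} convergence supplied by Theorem~\ref{thm:weak-profile}, plus a routine truncation near $q=0,1$ using the integrability of $\varphi$.

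Your approach, by contrast, hinges on two unproved claims: pointwise convergence $\rho_N(y)\to\bar\rho(y/N)$ via ``convergence of the discrete Poisson kernel'', and a uniform local replacement $\rho_N(y)=\bar\rho(y/N)+o(N^{1-\gamma})$ (or, equivalently, interior Lipschitz bounds on $\rho_N$). Neither follows from anything in the paper. Theorem~\ref{thm:weak-profile} gives only convergence of spatial averages against continuous test functions; upgrading this to the pointwise or quantitative uniform statements you need would require interior regularity theory for discrete $\gamma$-harmonic functions, which is a separate and nontrivial piece of analysis that you have not supplied. Without it, both the far-diagonal limit (which you justify by ``pointwise convergence'') and the near-diagonal estimate (the part you yourself call ``the hard part'') are genuine gaps. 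The paper's averaging trick is precisely the device that makes such regularity estimates unnecessary.
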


Observe that the current is a non-local function of the density.

\begin{rem} The results obtained in this paper could probably be generalized to the case where $p(\cdot)$ is such that $p(z) \sim L(z) | z|^{-(1+\gamma)}$ as $z \to \pm \infty$ for some slowly varying function $L$. Moreover, the model can be defined in higher dimensions and we expect similar results. However the proofs could be much more technical.
\end{rem}

%
%

\section{Weak solution of the stationary fractional heat equation with Dirichlet boundary conditions}
\label{sec:wsfhe}

The fractional Laplacian $(-\Delta)^{\gamma/2}$ of exponent $\gamma/2$  is defined on the set of functions $H:\RR \to \RR$ such that
\begin{equation}
\label{eq:integ1}
\int_{-\infty}^{\infty} \cfrac{|H(q)|}{(1 +|q|)^{1+\gamma}} dq < \infty
\end{equation}
by
\begin{equation}
(-\Delta)^{\gamma/2} H \, (q) = c_{\gamma}  \lim_{\ve \to 0} \int_{-\infty}^{\infty} {\bf 1}_{|y-q| \ge \ve} \, \cfrac{H(q) -H(y)}{|y-q|^{1+\gamma}} dy
\end{equation}
provided the limit exists (which is the case if $H$ is differentiable such that $H^\prime$ is $\beta$-H\"older for some $\beta>\gamma-1$ and satisfies (\ref{eq:integ1}), e.g. if $H$ is in the Schwartz space). Up to a multiplicative constant, $-(-\Delta)^{\gamma/2}$ is the generator of a $\gamma$-Levy stable process. The fractional Laplacian can also be defined in an equivalent way as a pseudo-differential operator of symbol $|\xi|^{\gamma}$ (up to a multiplicative constant). 

We are interested in the boundary problem (\ref{eq:fhed}) which has to be suitably interpreted since the fractional Laplacian is not a local operator. The correct interpretation of (\ref{eq:fhed}) which appears in this paper is that ${\bar \rho}$ is the {\textit{restriction}} to $[0,1]$ of a function $u: \RR \to \RR$ such that 
 \begin{equation}
 \begin{cases}
 \label{eq:fshe2}
 &(-\Delta)^{\gamma/2} \,  {u} (q) =0, \quad q \in (0,1),\\
 &{u} (q)=\alpha, \quad q \le 0,\\
 &{u}(q)=\beta, \quad q \ge 1.
 \end{cases}
 \end{equation}
In the PDE's literature this interpretation corresponds to the so-called ``restricted fractional Laplacian". Another popular interpretation of the fractional Laplacian with Dirichlet boundary conditions is the ``spectral fractional Laplacian" (\cite{Vaz}). The interpretation appearing in \cite{BKO} is a third one.\\

Let the functions $r^{\pm}: (0,1) \to (0, \infty)$ be defined by
\begin{equation}
r^- (q)=c_\gamma \gamma^{-1} q^{-\gamma},\quad  r^+ (q) = c_\gamma \gamma^{-1} (1-q)^{-\gamma}.
\end{equation}
The operator $\bb L$ is defined by its action on functions $H \in C^{2}_c ([0,1])$, the space of $C^2$ functions with compact suport included in $(0,1)$, by
\begin{equation}
\forall q \in (0,1), \quad ({\bb L} H)(q) = -(-\Delta)^{\gamma/2}  H \, (q) + r^{-} (q) H(q) + r^{+} (q) H(q).
\end{equation}

\begin{definition}
We say that a continuous function $\rho:[0,1] \to [0,1]$ is a weak solution of (\ref{eq:fhed}) if $\rho(0)=\alpha$, $\rho (1)=\beta$ and for any smooth function $H \in C_c^{2} ([0,1])$ we have that
\begin{equation*}
-\langle \rho \, , \,  (-\Delta)^{\gamma/2}  H \rangle + \langle \, \alpha r^- + \beta r^+   \, , \,  H \rangle =0
\end{equation*}
where $\langle \cdot, \cdot \rangle$ denotes the usual scalar product in ${\bb L}^2 ([0,1])$. 
\end{definition}

\begin{prop}
There exists a unique weak solution to (\ref{eq:fhed}). It is given by (\ref{eq:explicit}).

\end{prop}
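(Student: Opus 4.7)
The plan is to establish existence by verifying that the explicit function $\bar\rho$ of (\ref{eq:explicit}) satisfies the weak formulation, and then to obtain uniqueness via a nonlocal maximum principle.

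\textbf{Existence.} Extend $\bar\rho$ to $u:\RR\to\RR$ by setting $u=\alpha$ on $(-\infty,0]$ and $u=\beta$ on $[1,\infty)$. From the Poisson representation for the restricted fractional Laplacian (see \cite{BB}), $u$ is smooth on $(0,1)$ and satisfies $(-\Delta)^{\gamma/2} u(q)=0$ pointwise for every $q\in(0,1)$; moreover $\bar\rho\in C([0,1])$ with $\bar\rho(0)=\alpha$ and $\bar\rho(1)=\beta$. Fix $H\in C_c^{2}([0,1])$ and extend it by zero. Since $H$ is $C^2$ with compact support, $(-\Delta)^{\gamma/2}H$ is bounded and decays like $|q|^{-1-\gamma}$ at infinity, and pairs absolutely against the bounded function $u$. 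The key ingredient is the symmetry
\begin{equation*}
\int_{\RR} u(q)\,(-\Delta)^{\gamma/2}H(q)\,dq = \int_{\RR} H(q)\,(-\Delta)^{\gamma/2}u(q)\,dq = 0,
\end{equation*}
where the last equality uses that $H$ is supported in $[0,1]$ and $(-\Delta)^{\gamma/2}u=0$ there.

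\textbf{Computation of the boundary pieces.} Splitting the left-hand side above across $(-\infty,0)\cup(0,1)\cup(1,\infty)$, one obtains $\langle \bar\rho,(-\Delta)^{\gamma/2}H\rangle$ plus two exterior contributions. For $q<0$, $H(q)=0$, so
\begin{equation*}
(-\Delta)^{\gamma/2}H(q) \;=\; -c_\gamma \int_0^1 \frac{H(y)}{(y-q)^{1+\gamma}}\,dy,
\end{equation*}
and Fubini together with $\int_{-\infty}^0 (y-q)^{-1-\gamma}\,dq = (\gamma y^\gamma)^{-1}$ gives $\alpha\int_{-\infty}^0(-\Delta)^{\gamma/2}H(q)\,dq = -\alpha\int_0^1 H(y)\,r^-(y)\,dy$. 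The analogous computation on $(1,\infty)$ produces $-\beta\int_0^1 H(y)\,r^+(y)\,dy$. Collecting the three terms yields
\begin{equation*}
-\langle \bar\rho,(-\Delta)^{\gamma/2}H\rangle + \langle \alpha r^- + \beta r^+,H\rangle = 0,
\end{equation*}
which is precisely the weak formulation.

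\textbf{Uniqueness.} Let $\rho_1,\rho_2$ be two weak solutions and set $v=\rho_1-\rho_2$, continuous on $[0,1]$ with $v(0)=v(1)=0$. Extend $v$ by zero to $\tilde v:\RR\to\RR$; by the matching at the endpoints $\tilde v$ is continuous and bounded on $\RR$. Reversing the computation of the previous paragraph with the exterior value $0$ kills the $r^\pm$ terms and gives $\int_{\RR}\tilde v(q)\,(-\Delta)^{\gamma/2}H(q)\,dq=0$ for every $H\in C_c^{2}((0,1))$. Interior regularity for the fractional Laplacian then implies that $\tilde v$ is smooth on $(0,1)$ and $(-\Delta)^{\gamma/2}\tilde v(q)=0$ pointwise there. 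If $\tilde v$ attained a strictly positive maximum at some $q_0\in(0,1)$, then, because $\tilde v\equiv 0$ outside $[0,1]$ and $\tilde v\not\equiv \tilde v(q_0)$,
\begin{equation*}
(-\Delta)^{\gamma/2}\tilde v(q_0) = c_\gamma\int_{\RR} \frac{\tilde v(q_0)-\tilde v(y)}{|y-q_0|^{1+\gamma}}\,dy > 0,
\end{equation*}
a contradiction; the same argument applied to $-\tilde v$ rules out negative interior minima, so $v\equiv 0$.

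\textbf{Main obstacle.} The only delicate point is the symmetry identity used to open the existence argument: $u$ is only bounded and does not decay, so exchanging integrals against a principal-value kernel requires justification. I would handle this by rewriting
\begin{equation*}
(-\Delta)^{\gamma/2}H(q) = c_\gamma \int_{\RR} \frac{H(q)-H(y)-(y-q)H'(q)\mathbf 1_{|y-q|\le 1}}{|y-q|^{1+\gamma}}\,dy,
\end{equation*}
which eliminates the principal value since the integrand is absolutely integrable (the $C^2$ regularity of $H$ gives an $O(|y-q|^{1-\gamma})$ bound near the diagonal, integrable because $\gamma<2$), after which Fubini applies directly and yields the symmetry. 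The interior regularity invoked in the uniqueness step is standard for the fractional Laplacian and can be quoted from the literature.
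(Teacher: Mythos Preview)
Your proof is correct and, for existence, follows essentially the same route as the paper: extend $\bar\rho$ by its exterior values, identify the exterior pieces of $\int_{\RR} u\,(-\Delta)^{\gamma/2}H$ with $-\langle \alpha r^-+\beta r^+,H\rangle$ via Fubini, and close with the self-adjointness of $(-\Delta)^{\gamma/2}$ together with the pointwise equation from \cite{BB}. Your treatment of the symmetry identity via the compensated (second-difference) representation of the fractional Laplacian is in fact more careful than the paper's, which simply invokes symmetry ``in $\LL^2(\RR)$'' even though $u\notin\LL^2(\RR)$.

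For uniqueness the two arguments diverge in packaging rather than substance. The paper quotes Theorem~3.12 of \cite{BB} to upgrade the distributional identity $\langle \rho_1-\rho_2,(-\Delta)^{\gamma/2}H\rangle=0$ to the statement that $\rho_1-\rho_2$ agrees a.e.\ with a $\gamma/2$-harmonic function, and then concludes from the probabilistic mean-value definition of harmonicity (the footnote in the paper): since the function vanishes outside $(0,1)$, the exit-distribution representation forces it to vanish inside. You instead invoke interior regularity to obtain $(-\Delta)^{\gamma/2}\tilde v=0$ pointwise and run the strong maximum principle explicitly. Both approaches rest on the same regularity theory for fractional-harmonic distributions; yours is more self-contained at the final step, while the paper's is shorter because the maximum principle is hidden inside the cited theorem and the probabilistic definition.
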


\begin{proof}
The existence of a continuous (explicit) solution given by (\ref{eq:explicit}) and satisfying (\ref{eq:fshe2}) is a well known fact (see e.g. \cite{BB}). Let us denote it by $\rho$ and let us show it is also a weak solution.  For any $H \in C_c^2 ([0,1])$ we have
\begin{equation}
\label{eq:refcon}
\begin{split}
&\langle \rho \, , \,   {\bb L} H \rangle + \langle \alpha -\rho \, , \,  Hr^{-} \rangle + \langle \beta -\rho \, , \,  H  r^+\rangle \\  
&= -\langle \rho \, , \,   (-\Delta)^{\gamma/2} H \rangle + \langle \alpha \, , \,  Hr^{-} \rangle + \langle \beta \, , \,  H  r^+\rangle \\
&= -\int_{-\infty}^{\infty}  \rho (q) ( - \Delta)^{\gamma/2} H  (q) \, dq.
\end{split}
\end{equation}
To prove the last equality we first recall that $H$ vanishes outside of $(0,1)$, $\rho (y)=\alpha$ for $y\le 0$, $\rho (y)=\beta$ for $y\ge 1$  and 
$$r^- (q) = c_\gamma \int_{-\infty}^0 |q-y|^{-1-\gamma} dy, \quad r^+ (q)=c_\gamma \int_1^{+\infty} |q-y|^{-1-\gamma} dy.$$ 
It follows that 
\begin{equation*}
\begin{split}
&\langle \alpha \, , \,  Hr^{-} \rangle = \alpha c_\gamma \int_0^1 H(q) \, \Big(\int_{-\infty}^0 \cfrac{1}{|q-y|^{1+\gamma}} dy \Big) dq\\
&= \alpha c_\gamma \int_{-\infty}^\infty H(q) \, \Big(\int_{-\infty}^0 \cfrac{1}{|q-y|^{1+\gamma}} dy \Big) dq= c_\gamma \int_{-\infty}^0  \alpha \, \Big(\int_{-\infty}^\infty \cfrac{H(q) }{|q-y|^{1+\gamma}} dq \Big) dy\\
&=c_\gamma \int_{-\infty}^0  \, \rho (y) \Big(\int_{-\infty}^\infty \cfrac{H(q) -H(y)}{|q-y|^{1+\gamma}} dq \Big) dy =- \int_{-\infty}^0  \, \rho (y) \, (-\Delta)^{\gamma/2} H \, (y) dy
\end{split}
\end{equation*}
and similarly for $ \langle \beta \, , \,  H  r^+\rangle$. Then the last line of (\ref{eq:refcon}) follows by adding together the three terms of the second line of (\ref{eq:refcon}). Since $(-\Delta)^{\gamma/2}$ is a symmetric operator in $\LL^2 (\RR)$ we have 
\begin{equation*}
\begin{split}
& \int_{-\infty}^{\infty}  \rho (q) (  -\Delta)^{\gamma/2} H (q) \, dq = \int_{-\infty}^{\infty}  H (q) (  -\Delta)^{\gamma/2} \rho (q) \, dq\\
&= \int_{0}^{1}  H (q) (  -\Delta)^{\gamma/2} \rho (q) \, dq =0
\end{split}
\end{equation*}

Let us now turn to the uniqueness part. Let $\rho_1$ and $\rho_2$ be two weak solutions. We extend them continuously to $\RR$ by $\rho_1 (y)= \rho_2 (y)=\alpha$ if $y \le 0$ and $\rho_1 (y)= \rho_2 (y)=\beta$ if $y \ge 1$. By linearity we have that for any $H \in C^2_{c} ([0,1])$
\begin{equation*}
\langle \rho_1 -\rho_2 , (-\Delta)^{\gamma/2} H \rangle =0.
\end{equation*}
Since $\rho_1 -\rho_2 =0$ outside $(0,1)$, $\langle \cdot, \cdot \rangle$ may be replaced by the scalar product in $\LL^2 (\RR)$. By using Theorem 3.12 in \cite{BB}, there exists a $\gamma/2$-harmonic {\footnote{A function $u:(0,1) \to \RR$ is $\gamma/2$-harmonic in $(0,1)$ if for any open set $U$ with closure included in $(0,1)$ and any $x \in U$, $u(x) =\EE_x ( u (X_{\tau_U}))$ where $(X_t)_t$ is a $\gamma/2$-stable L\'evy process and $\tau_U$ is its exit time from $U$.}} (continuous) function $u$, such that $u = \rho_{1} -\rho_{2}$ a.e. Since $\rho_1 =\rho_2$ outside of $(0,1)$, we can deduce that $\rho_1=\rho_2$ everywhere.
\end{proof}

For any continuous function $F:[0,1] \to \RR$ we denote by ${\mc L}_N F$ the continuous function on $[0,1]$ obtained as the linear interpolation of the function defined by $({\mc L}_N F) (0) =({\mc L}_N F) (1)=0$ and
$$ \forall x\in \Lambda_N, \quad ({\mc L}_N F) (\tfrac{x}{N}) = \sum_{y \in \Lambda_N} p(y-x) \left[ F(\tfrac{y}{N}) -F(\tfrac{x}{N})\right].$$

We introduce also the two linear interpolation functions $r_N^{\pm} :[0,1] \to \RR$ such that for $z \in \Lambda_N$
\begin{equation}
r_N^- (\tfrac{z}{N})= \sum_{y \ge z} p(y), \quad r_N^+ (\tfrac{z}{N})= \sum_{y \le z-N} p(y)
\end{equation}   
and 
$$r_N^{\pm} (0)= r_N^{\pm} (\tfrac{1}{N}), \quad r_N^{\pm} (1)= r_N^{\pm} (\tfrac{N-1}{N}).$$

Let finally ${\mc K}_N$ the operator defined by 
\begin{equation*}
{\mc K}_N={\mc L}_N - r_N^- - r_N^+
\end{equation*}
which, for functions $F$ with compact support in $[0,1]$, satisfies
$$({\mc K}_N F)(\tfrac{x}{N}) = \sum_{y \in \ZZ} p(y-x) \left[ F(\tfrac{y}{N}) -F(\tfrac{x}{N})\right].$$

\begin{lem}
\label{lem:app}
Let $H$ be a smooth function with compact support included in $[a,1-a]$ where $a \in (0,1)$. Then we have the following uniform convergence on $[a,1-a]$ 
\begin{equation}
\begin{split}
&\lim_{N \to \infty} N^{\gamma} r_N^{-} (q) = c_\gamma \gamma^{-1} q^{-\gamma}= r^- (q),\\
&\lim_{N \to \infty} N^{\gamma} r_N^{+} (q) = c_\gamma \gamma^{-1} (1-q)^{-\gamma}=r^+ (q),\\
&\lim_{N \to \infty} N^{\gamma} ({\mc K}_N H)(q) = -\; [ (- \Delta)^{\gamma/2}  H ] \, (q).
\end{split}
\end{equation}
\end{lem}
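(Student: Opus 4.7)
The plan is to handle the three convergences separately. For (1) and (2), the statements reduce to tail-sum versus tail-integral comparisons. At the sample point $q = z/N$ one has $r_N^-(z/N) = c_\gamma \sum_{y \ge z} y^{-1-\gamma}$. Comparing this monotone tail sum with the integral $\int_z^\infty c_\gamma\, y^{-1-\gamma} dy = (c_\gamma/\gamma) z^{-\gamma}$ yields
\begin{equation*}
\Big| N^\gamma r_N^-(z/N) - c_\gamma \gamma^{-1} q^{-\gamma} \Big| \le c_\gamma N^{-1} q^{-1-\gamma},
\end{equation*}
which is $O(N^{-1})$ uniformly for $q \in [a, 1-a]$ since $q^{-1-\gamma} \le a^{-1-\gamma}$. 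The linear interpolation on $[0,1]$ contributes only an additional $O(N^{-1})$ error thanks to the Lipschitz smoothness of the limit $r^-$ on $[a, 1-a]$. Item (2) follows identically after substituting $z$ by $N-z$ and using the evenness of $p$.

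For (3), the key step is to exploit the symmetry $p(k) = p(-k)$. Writing $k = y-x$ and symmetrizing, the sample values read
\begin{equation*}
({\mc K}_N H)(\tfrac{x}{N}) = \frac{c_\gamma}{2} \sum_{k \ne 0} |k|^{-1-\gamma} \big[ H(\tfrac{x+k}{N}) + H(\tfrac{x-k}{N}) - 2 H(\tfrac{x}{N}) \big].
\end{equation*}
Multiplying by $N^\gamma$, using $|k|^{-1-\gamma} = N^{-1-\gamma} |k/N|^{-1-\gamma}$, and setting $u_k = k/N$, $\Delta u = 1/N$, this becomes the Riemann sum of step $1/N$ associated with
\begin{equation*}
\frac{c_\gamma}{2} \int_\RR |u|^{-1-\gamma} \big[H(q+u) + H(q-u) - 2H(q)\big]\, du,
\end{equation*}
which coincides with $-\,[(-\Delta)^{\gamma/2} H](q)$ by the standard symmetrization of the principal value that defines the fractional Laplacian.

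Convergence of the Riemann sum is established by dominated convergence. Near $u=0$, Taylor's theorem gives $|H(q+u) + H(q-u) - 2H(q)| \le \|H''\|_\infty u^2$, so the integrand is controlled by $\|H''\|_\infty |u|^{1-\gamma}$, which is integrable at the origin because $\gamma < 2$. For $|u| \ge 1$ the integrand is bounded by $4 \|H\|_\infty |u|^{-1-\gamma}$, integrable at infinity because $\gamma > 0$. Both bounds are independent of $q$, so the Riemann sums converge uniformly in $q \in [a, 1-a]$ at the sample points. The linear interpolation step is then absorbed using the continuity of the smooth limit $-[(-\Delta)^{\gamma/2} H]$ on $[a, 1-a]$.

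The main obstacle is to control the principal-value singularity at $u=0$: since $\gamma > 1$, the first-order increments $[H(q+u) - H(q)]$ produce the non-integrable factor $|u|^{-\gamma}$ near the origin, and a naive Riemann-sum argument diverges. The evenness of $p$ makes the symmetrization above legitimate, converting first differences into second differences of order $u^2$ and restoring absolute integrability for every $\gamma \in (1,2)$.
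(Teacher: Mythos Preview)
Your argument is correct. Items (1) and (2) are handled exactly as in the paper: both obtain the bound $|N^\gamma r_N^-(z/N)-r^-(z/N)|\le c_\gamma N^{-1}(z/N)^{-1-\gamma}$, which is $O(N^{-1})$ on $[a,1-a]$.

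For item (3) the two proofs diverge. You symmetrize to obtain second differences $\theta_q(u)=H(q+u)+H(q-u)-2H(q)$ and then argue that the Riemann sum $\tfrac{1}{N}\sum_{k\ne 0}|k/N|^{-1-\gamma}\theta_q(k/N)$ converges directly to the integral, using the uniform majorants $\|H''\|_\infty |u|^{1-\gamma}$ near $0$ and $4\|H\|_\infty |u|^{-1-\gamma}$ at infinity. This is legitimate: writing the Riemann sum as $\int h_N^{(q)}(u)\,du$ with the step function $h_N^{(q)}(u)=g_q(\lceil Nu\rceil/N)$, one checks that $|h_N^{(q)}(u)|\le C(|u|^{1-\gamma}\wedge |u|^{-1-\gamma})$ independently of $N$ and $q$, so dominated convergence applies; uniformity in $q$ follows from a standard $\ve$--$M$ truncation since the majorant and the modulus of continuity of $g_q$ on $\{\ve\le|u|\le M\}$ are $q$-independent. (Your phrasing ``established by dominated convergence'' compresses this truncation, but all the needed bounds are stated.)

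The paper instead performs a discrete summation by parts, rewriting $\sum_z p(z)\theta_q(z/N)$ as $\sum_z r_N^-(z/N)\big[\theta_q(z/N)-\theta_q((z-1)/N)\big]$ plus a boundary term, thereby trading the weight $|z|^{-1-\gamma}$ for the already-controlled quantity $N^\gamma r_N^-(z/N)$ and recycling item (1). This has the virtue of making item (3) a corollary of item (1), but the bookkeeping (splitting at $aN$, tracking the boundary term, a second Taylor expansion of $\theta'$) is heavier. Your direct route is shorter and conceptually cleaner: once the symmetrization kills the first-order singularity, the integrand is absolutely integrable and no integration by parts is needed.
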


\begin{proof}
This Lemma establishes uniform convergence of Riemann sums to corresponding integrals. But since the uniformity statement requires a bit of technical work it is postponed to Appendix \ref{sec:app3}.
 \end{proof}

\begin{rem}
\label{rem:refcon} The two first items of the previous lemma are in fact valid for $\gamma \in (0,2)$. See the proof in Appendix \ref{sec:app3}.
\end{rem}

\section{Proofs}
\label{sec:Proofs}

The first step consists to obtain a sharp upper bound on the average current in the non-equilibrium stationary state (see Lemma \ref{lem:Fick}). This bound will be used to derive an estimate of the entropy production (Lemma \ref{lem:entropyproductionbound}) which is the key estimate to obtain by a coarse graining argument and entropy bounds that the empirical density at each extremity of $\Lambda_N$ is given by $\alpha$ and $\beta$ (Corollary \ref{cor:boundary}). To identify the form of the stationary profile in the bulk, we use a method introduced in \cite{KLO} for boundary driven diffusive systems (Lemma \ref{lem:statprof}). Fractional Fick's law is then derived.

\subsection{Entropy production bounds}

\begin{lem}
\label{lem:Fick}
Let $\gamma \in (1,2)$. There exists a constant $C>0$ such that for any $N\ge 2$ 
$$\langle W_1 \rangle_{N} \le C N^{1-\gamma}.$$ 
\end{lem}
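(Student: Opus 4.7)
My plan is to use the fact that $L_N \eta_x$ is affine in $\eta$, which closes the equation for the one-point profile $\rho_N(x) := \mu_N(\eta_x)$, and then combine a variational (Dirichlet-energy) bound on $\rho_N$ with a Green-identity-type formula relating the energy to the current.

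A direct computation gives $L_N \eta_x = \sum_{y \in \mathbb{Z}} p(x-y)(\eta_y - \eta_x)$ with the convention $\eta_y = \alpha$ for $y \le 0$ and $\eta_y = \beta$ for $y \ge N$, so by invariance of $\mu_N$ the profile $\rho_N$ satisfies the discrete fractional harmonic equation $\sum_{y \in \mathbb{Z}} p(x-y)[\rho_N(y) - \rho_N(x)] = 0$ on $\Lambda_N$, with the prescribed Dirichlet boundary values. A parallel direct computation from the definition of $W_1$ (in which the ``reservoir--reservoir'' term $(\beta-\alpha)\sum_{y\le 0, z\ge N}p(z-y)$ cancels) yields
\begin{equation*}
\langle W_1 \rangle_N \;=\; \sum_{z=1}^{N-1} [\alpha - \rho_N(z)]\, r_N^-(z/N).
\end{equation*}
Assume WLOG $\alpha < \beta$; by the discrete maximum principle $\alpha \le \rho_N \le \beta$, so $J := -\langle W_1 \rangle_N \ge 0$ and the task reduces to proving $J \le C N^{1-\gamma}$.

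Introduce the Dirichlet energy $\mathcal{E}(\rho) := \tfrac{1}{2} \sum_{x,y \in \mathbb{Z}} p(x-y)[\rho(x) - \rho(y)]^2$, which converges absolutely for profiles bounded in $[\alpha,\beta]$ and constant outside $\Lambda_N$ (this is where $\gamma>1$ is used). Summation by parts using the symmetry of $p$ gives $\sum_x \rho_N\,\mathcal{L}\rho_N = -\mathcal{E}(\rho_N)$, where $\mathcal{L}$ denotes the discrete Laplacian on $\mathbb{Z}$. Splitting the left-hand sum over $\{x\le 0\}$, $\Lambda_N$ and $\{x\ge N\}$, and using $\mathcal{L}\rho_N=0$ on $\Lambda_N$ together with the boundary values, a short calculation yields the key identity
\begin{equation*}
\mathcal{E}(\rho_N) \;=\; (\beta-\alpha)\,J \;+\; (\beta-\alpha)^2\,D_N, \qquad D_N := \sum_{y \le 0,\, z \ge N} p(z-y).
\end{equation*}
A direct estimate gives $D_N \le C N^{1-\gamma}$, so it suffices to bound $\mathcal{E}(\rho_N)$ from above by $C N^{1-\gamma}$.

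The same first-variation argument (perturbations $\psi$ supported in $\Lambda_N$ are orthogonal to $\mathcal{L}\rho_N$) shows that $\rho_N$ minimizes $\mathcal{E}$ among all functions with its boundary values. Comparing with the smooth test profile $\tilde\rho_N(x) := \phi(x/N)$, where $\phi$ is a fixed smooth function equal to $\alpha$ on $(-\infty,0]$ and to $\beta$ on $[1,\infty)$, a Riemann-sum estimate analogous to Lemma \ref{lem:app} gives
\begin{equation*}
\mathcal{E}(\rho_N) \;\le\; \mathcal{E}(\tilde\rho_N) \;\le\; \tfrac{c_\gamma}{2}\,N^{1-\gamma} \iint \frac{[\phi(q)-\phi(q')]^2}{|q-q'|^{1+\gamma}}\, dq\, dq' \;\le\; C N^{1-\gamma},
\end{equation*}
the double integral being finite for smooth $\phi$ and $\gamma>1$. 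Combining the identity with this bound, $J \le \mathcal{E}(\tilde\rho_N)/(\beta-\alpha) \le C N^{1-\gamma}$, which proves the lemma. The main technical step is making the Riemann-sum estimate for $\mathcal{E}(\tilde\rho_N)$ uniform in $N$, which requires splitting between small jumps $|x-y|\le \delta N$ (where one uses smoothness of $\phi$) and large jumps (where one uses boundedness of $\phi$ and the sharp tail $p(k)\sim c_\gamma |k|^{-1-\gamma}$); this is a direct variant of the reasoning behind Lemma \ref{lem:app}.
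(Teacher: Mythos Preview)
Your argument is correct and genuinely different from the paper's. The paper proceeds by the averaging trick: since $\langle W_x\rangle_N$ is independent of $x$, one writes $\langle W_1\rangle_N=\tfrac{1}{N-1}\sum_x\langle W_x\rangle_N$, which produces a decomposition into three sums (left-reservoir/bulk, bulk/right-reservoir, bulk/bulk), each of which is bounded in absolute value by $O(N^{1-\gamma})$ using \emph{only} the trivial bound $0\le\langle\eta_z\rangle_N\le1$ and elementary tail estimates for $p$. In particular, the paper never uses that the one-point profile $\rho_N$ is discretely harmonic, nor any variational principle. Your route exploits precisely this extra structure: the closed equation for $\rho_N$ yields the Green identity $\mathcal{E}(\rho_N)=(\beta-\alpha)J+(\beta-\alpha)^2D_N$, and the variational characterization of $\rho_N$ lets you dominate $\mathcal{E}(\rho_N)$ by the energy of a smooth interpolant, which scales like $N^{1-\gamma}$. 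What the paper's approach buys is simplicity and robustness (it would survive for models where the one-point function does not close), and the decomposition it produces is reused verbatim later in the proof of the fractional Fick's law. What your approach buys is a stronger intermediate statement (the $O(N^{1-\gamma})$ bound on the Dirichlet energy of $\rho_N$ itself) and a clearer link to the continuum variational structure; it also makes transparent why $\gamma>1$ is needed (finiteness of the cross-reservoir sum $D_N$ and of the $H^{\gamma/2}$ seminorm of $\phi$). Both arguments implicitly use $\langle W_1\rangle_N=\langle W_N\rangle_N$; in your case this is what collapses the boundary contributions in the Green identity to a single $J$. Finally, note that in the regime $\alpha<\beta$ the maximum principle already gives $\langle W_1\rangle_N\le 0$, so the lemma as literally stated is trivial; what you (and the paper) actually establish is the two-sided bound $|\langle W_1\rangle_N|\le CN^{1-\gamma}$, which is what is used downstream in the entropy-production estimate.
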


\begin{proof}
By stationarity we have that for any $x \in \Lambda_N$, $\langle W_1 \rangle_{N} = \langle W_x \rangle_N$. It follows that
\begin{equation}
\begin{split}
\langle W_1 \rangle_{N} = \cfrac{1}{N-1} \sum_{x=1}^{N-1} \langle W_x \rangle_N & = \cfrac{1}{N-1} \sum_{y<z} p(z-y) [\langle \eta_y \rangle_N -\langle \eta_z \rangle_N ] \theta (y,z)\\
&+ (\beta- \alpha)  \sum_{\substack{y\le 0\\ z \ge N}} p(z-y)
\end{split}
\end{equation}
where 
$$\theta (y,z) ={\rm{Card}} \{ x \in \Lambda_N \; ;\; y+1 \le x \le z \}.$$
Considering the different positions of $y,z$ in $\Lambda_N$, we get 
\begin{equation}
\label{eq:WWW}
\begin{split}
\langle W_1 \rangle_{N} &= \cfrac{1}{N-1} \sum_{z=1}^{N-1} z [ \alpha- \langle \eta_z \rangle_N ] \sum_{y \le 0} p(z-y) \\
&+\cfrac{1}{N-1} \sum_{y=1}^{N-1} (N-1 -y) [ \langle \eta_y \rangle_N -\beta ] \sum_{z \ge N} p(z-y) \\
&+\cfrac{1}{N-1} \sum_{\substack{y<z \\ z,y \in \Lambda_N}} p(z-y) (z-y) [\langle \eta_y \rangle_N -\langle \eta_z \rangle_N]\\
&=(I)+(II)+(III).  
\end{split}
\end{equation}
We have that 
\begin{equation*}
|(I)| \le \cfrac{2}{N-1} \sum_{z=1}^{N-1} z \sum_{y \ge z} p(y) = {\mc O} (N^{1-\gamma}) 
\end{equation*}
since $\sum_{y \ge z} p(y) = {\mc O} (z^{-\gamma})$ as $z \to \infty$. A similar upper bound is valid for $(II)$. For the last term we observe that
\begin{equation*}
\begin{split}
(III)&=-\cfrac{1}{N-1} \sum_{y=1}^{N-2}  \sum_{k=1}^{N-1-y} k p(k)[ \langle \eta_{y+k} \rangle_N - \langle \eta_y \rangle_N ].\\
\end{split}
\end{equation*}
Now, using Fubini Theorem we get
\begin{equation*}
(III)=-\cfrac{1}{N-1} \sum_{k=1}^{N-2} kp(k) \sum_{y=1}^{N-1-k} [ \langle \eta_{y+k} \rangle_N - \langle \eta_y \rangle_N ].
\end{equation*}
Observe that for any sequence $(f(x))_x$ and any $n, k\ge 1$ we have 
$$\sum _{x =1}^{n} [f(x+k)-f(x)] = \sum_{x=1}^{k} [f(n +1+k -x)-f(x)].$$
It follows that 
\begin{equation*}
\begin{split}
(III)&=-\cfrac{1}{N-1} \sum_{k=1}^{N-2} kp(k) \sum_{y=1}^{k} [ \langle \eta_{N-y} \rangle_N - \langle \eta_y \rangle_N ]
\end{split}
\end{equation*}
so that 
$$ |(III)| \le \cfrac{2}{N-1} \sum _{k=1}^{N-2} k^{2}p(k)= {\mc O} (N^{1-\gamma}).$$ 
\end{proof}

A simple consequence of this Lemma is the following bound on the Dirichlet forms of the stationary state. 

\begin{lem}
\label{lem:entropyproductionbound}
Let $\rho \in (0,1)$. There exists a constant $C:=C(\rho, \alpha,\beta)>0$ such that for any $N \ge 2$
\begin{equation*}
\begin{split}
& \sum_{x, y \in \Lambda_N} p(y-x) \left\langle \left[ \sqrt{f_{N,\rho} (\eta^{xy})} - \sqrt{f_{N,\rho} (\eta)} \right]^2\right\rangle_{\rho} \; \le \; \cfrac{C}{N^{\gamma -1}},\\
& \sum_{x \in \Lambda_N} \sum_{y \le 0} p(y-x) \left\langle \left[ {\sqrt {f_{N,\alpha} (\eta^{x})}} -{\sqrt {f_{N,\alpha} (\eta)}} \right]^2\right\rangle_\alpha \; \le \; \cfrac{C}{N^{\gamma -1}},\\
& \sum_{x \in \Lambda_N} \sum_{y \ge N} p(y-x) \left\langle \left[ {\sqrt {f_{N,\beta} (\eta^{x})}} -{\sqrt {f_{N,\beta} (\eta)}} \right]^2\right\rangle_\beta \; \le \; \cfrac{C}{N^{\gamma -1}}.\\
 \end{split}
 \end{equation*}
\end{lem}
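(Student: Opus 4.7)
My plan is to combine the stationarity identity $0=\int f_{N,\star}L_N(\log f_{N,\star})\, d\nu_\star$ (with $\star\in\{\rho,\alpha,\beta\}$, following from $L_N^{*}f_{N,\star}=0$) with the convex inequality $(a-b)(\log a-\log b)\geq 4(\sqrt a-\sqrt b)^2$. Decomposing $L_N=L_N^0+L_N^\ell+L_N^{r}$, this inequality identifies, up to universal constants, each term $\int f_{N,\star}L_N^{(i)}(\log f_{N,\star})\, d\nu_\star$ for which $L_N^{(i)}$ is reversible under $\nu_\star$ as minus a Dirichlet form of $\sqrt{f_{N,\star}}$. Since $L_N^0$ is reversible under every Bernoulli product, while $L_N^\ell$ (resp.\ $L_N^{r}$) is reversible only under $\nu_\alpha$ (resp.\ $\nu_\beta$), each of the three bounds is reduced to controlling a single ``cross term'' involving the non-reversible piece. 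I plan to relate this cross term to the macroscopic current $\langle W_1\rangle_N$ and then invoke Lemma \ref{lem:Fick}.

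The key algebraic step is a change-of-reference identity for the log-density. Writing $\psi(u):=\log\!\bigl(u/(1-u)\bigr)$, the product structure of the Bernoulli measures yields
\[\log\frac{f_{N,\alpha}(\eta^x)}{f_{N,\alpha}(\eta)}=\log\frac{f_{N,\beta}(\eta^x)}{f_{N,\beta}(\eta)}+(1-2\eta_x)\bigl[\psi(\beta)-\psi(\alpha)\bigr].\]
Combined with the identity $[\eta_x(1-\beta)+(1-\eta_x)\beta](1-2\eta_x)=\beta-\eta_x$ and $f_{N,\alpha}\, d\nu_\alpha=f_{N,\beta}\, d\nu_\beta=d\mu_N$, this gives
\[\int f_{N,\alpha}L_N^{r}\log f_{N,\alpha}\, d\nu_\alpha=\int f_{N,\beta}L_N^{r}\log f_{N,\beta}\, d\nu_\beta+[\psi(\beta)-\psi(\alpha)]\sum_{x\in\Lambda_N}r_N^{+}(\tfrac{x}{N})(\beta-\langle\eta_x\rangle_N).\]
The first integral on the right is $\leq 0$ by reversibility of $L_N^{r}$ under $\nu_\beta$, and the sum equals $-\langle W_1\rangle_N$: stationarity $\sum_{x}\langle L_N\eta_x\rangle_N=0$ together with the symmetry of the bulk part yields $\sum_{x}r_N^{-}(\tfrac{x}{N})(\alpha-\langle\eta_x\rangle_N)+\sum_{x}r_N^{+}(\tfrac{x}{N})(\beta-\langle\eta_x\rangle_N)=0$, while a direct expansion of the definition of $W_1$ identifies $\langle W_1\rangle_N=\sum_{x}r_N^{-}(\tfrac{x}{N})(\alpha-\langle\eta_x\rangle_N)$. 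Lemma \ref{lem:Fick} then gives
\[\int f_{N,\alpha}L_N^{r}\log f_{N,\alpha}\, d\nu_\alpha\leq -[\psi(\beta)-\psi(\alpha)]\langle W_1\rangle_N\leq C\,N^{1-\gamma}.\]

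The three bounds follow quickly from this. Stationarity under $\nu_\alpha$ together with reversibility of $L_N^0,L_N^\ell$ under $\nu_\alpha$ gives $D_N^0(\sqrt{f_{N,\alpha}},\nu_\alpha)+D_N^\ell(\sqrt{f_{N,\alpha}},\nu_\alpha)\leq C\,N^{1-\gamma}$, which is the second bound of the lemma; the third is proved symmetrically by exchanging the roles of $\alpha$ and $\beta$. For the first bound, the analogous change-of-reference identity between $\nu_\rho$ and $\nu_\alpha$ produces
\[\int f_{N,\rho}L_N^\ell\log f_{N,\rho}\, d\nu_\rho=\int f_{N,\alpha}L_N^\ell\log f_{N,\alpha}\, d\nu_\alpha+[\psi(\alpha)-\psi(\rho)]\langle W_1\rangle_N,\]
and similarly for $L_N^{r}$; the first term on the right is $O(N^{1-\gamma})$ by the second bound already proved (it is exactly the entropy Dirichlet form of $f_{N,\alpha}$ that has just been controlled) and the second by Lemma \ref{lem:Fick}, so that the full left-plus-right cross term under $\nu_\rho$ is $O(N^{1-\gamma})$; stationarity under $\nu_\rho$ and reversibility of $L_N^0$ then deliver the first bound. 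The main obstacle is the algebraic identity of the second paragraph---extracting the ``entropy production equals chemical-potential gap times current'' structure from the non-reversible cross term---after which everything is powered directly by Lemma \ref{lem:Fick}.
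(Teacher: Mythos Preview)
Your proposal is correct and follows essentially the same route as the paper: stationarity $0=\mu_N(L_N\log f_{N,\star})$, the convex inequality relating the log-Sobolev entropy production to the Dirichlet form of $\sqrt{f_{N,\star}}$, reversibility of the relevant piece of $L_N$ under $\nu_\star$, and then Lemma~\ref{lem:Fick} to bound the cross term. The only real difference is in the algebra used to control the non-reversible boundary piece: the paper introduces a parameter $R$, performs the change of variables $w=\eta^x$, and then chooses $R=\tfrac{\beta}{1-\beta}\tfrac{1-\rho}{\rho}$ so that the leftover term has a sign; your change-of-reference identity is the same manipulation in disguise (your $\psi(\beta)-\psi(\rho)$ is exactly the paper's $\log R$), but it makes the ``entropy production $=$ chemical-potential gap $\times$ current'' structure explicit and yields an \emph{equality} plus a manifestly nonpositive term rather than an inequality obtained after a clever choice. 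One small simplification: in your argument for the first bound you do not actually need to invoke the already-proved second bound---the term $\int f_{N,\alpha}L_N^\ell\log f_{N,\alpha}\,d\nu_\alpha$ is $\le 0$ directly by reversibility of $L_N^\ell$ under $\nu_\alpha$, which is all you use.
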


\begin{proof}
To simplify the notation we denote $f_{N,\rho}$ by $f_N$. By definition of stationary state we have:
\begin{equation}
\label{eq:byby0}
\begin{split}
& 0 = \langle f_{N} L_{N} \log f_{N}\rangle_{\rho}\\
&=  \langle f_{N} L^0_N \log f_{N}\rangle_{\rho}+ \langle f_{N} L_N^{r} \log f_{N}\rangle_{\rho}+\langle f_{N} L_N^{\ell}  \log f_{N}\rangle_{\rho}.
\end{split}
\end{equation}
We first obtain an upper bound for the second and the third term on the right hand side of the previous equality. For any $R>0$, the second term is equal to
\begin{equation}
\label{eq:second}
\begin{split}
 &\sum_{\substack{x \in \Lambda_N\\y \ge N}}p(x-y)\langle f_{N}(\eta)\eta_{x} (1-\beta) \left[ \log f_{N}(\eta^{x})-\log f_{N}(\eta)\right]\rangle_{\rho}\\
& +\sum_{\substack{x \in \Lambda_N\\y \ge N}}p(x-y)\langle f_{N}(\eta)(1-\eta_{x})\beta \left[ \log f_{N}(\eta^{x})-\log f_{N}(\eta)\right]\rangle_{\rho}\\
&= \sum_{\substack{x \in \Lambda_N\\y \ge N}}p(x-y)\left\langle f_{N}(\eta)\eta_{x} (1-\beta) \left[ \log\dfrac{R f_{N}(\eta^{x})}{f_{N}(\eta)} \right]\right\rangle_{\rho}\\
& +\sum_{\substack{x \in \Lambda_N\\y \ge N}}p(x-y)\left\langle f_{N}(\eta)(1-\eta_{x})\beta \left[ \log\dfrac{f_{N}(\eta^{x})}{R f_{N}(\eta)} \right]\right\rangle_{\rho}\\
&- \log R \sum_{\substack{x \in \Lambda_N\\y \ge N}}p(x-y)\left\langle f_{N}(\eta)\left(\eta_{x} (1-\beta)-(1-\eta_{x})\beta\right)\right\rangle_{\rho}.
\end{split}
\end{equation}
Now by the change of variable $w = \eta^{x}$ we have that (\ref{eq:second}) is equal to 
\begin{equation*}
\begin{split}
 &-\sum_{\substack{x \in \Lambda_N\\y \ge N}}p(x-y)\left\langle f_{N}(w^{x})(1-w_{x}) (1-\beta) \left[\log\dfrac{f_{N}(w^{x})}{Rf_{N}(w)} \right]\left(\dfrac{\rho}{1-\rho}\right)\right\rangle_{\rho}\\
 & +\sum_{\substack{x \in \Lambda_N\\y \ge N}}p(x-y)\left\langle f_{N}(\eta)(1-\eta_{x})\beta \left[ \log\dfrac{f_{N}(\eta^{x})}{R f_{N}(\eta)} \right]\right\rangle_{\rho}\\
&- \log R \sum_{\substack{x \in \Lambda_N\\y \ge N}}p(x-y)\left\langle f_{N}(\eta)\left(\eta_{x} (1-\beta)-(1-\eta_{x})\beta\right)\right\rangle_{\rho}
\end{split}
\end{equation*}
Now, choosing $R = \dfrac{\beta}{1-\beta}\dfrac{1-\rho}{\rho}$ and using $(x-y)\log(y/x)< 0$, we have that the last expression is equal to 
\begin{eqnarray*}
 &&\dfrac{\beta}{R}\sum_{\substack{x \in \Lambda_N\\y \ge N}}p(x-y)\left\langle (1-w_{x})\left( Rf_{N}(w)-f_{N}(w^{x}) \right)\left[\log\dfrac{f_{N}(w^{x})}{Rf_{N}(w)} \right]\right\rangle_{\rho}\\
&&- \log R \sum_{\substack{x \in \Lambda_N\\y \ge N}}p(x-y)\left\langle f_{N}(\eta)\left(\eta_{x} (1-\beta)-(1-\eta_{x})\beta\right)\right\rangle_{\rho}\\
&\le & - \log \left(\dfrac{\beta}{1-\beta}\dfrac{1-\rho}{\rho} \right)\sum_{\substack{x \in \Lambda_N\\y \ge N}}p(x-y)\left\langle f_{N}(\eta)\left(\eta_{x}-\beta\right)\right\rangle_{\rho}.
\end{eqnarray*}
We proved therefore that 
$$ \langle f_{N}L_N^{r}\log f_{N}\rangle_{\rho} \le -\log \left(\dfrac{\beta}{1-\beta}\dfrac{1-\rho}{\rho} \right) \left\langle W_{N}\right\rangle_{N}.$$
Similar computations give that
$$  \langle f_{N}L_N^{\ell} \log f_{N}\rangle_{\rho} \le -\log \left(\dfrac{1-\alpha}{\alpha}\dfrac{\rho}{1-\rho} \right)\left\langle W_{1}\right\rangle_{N}.$$
By Lemma \ref{lem:Fick}, we get that there exists a constant $C^{\prime}> 0$ such that
$$ \langle f_{N}L_N^{r}\log f_{N}\rangle_{\rho} \le  C^{\prime} N^{1-\gamma}, $$
$$  \langle f_{N}L_N^{\ell} \log f_{N}\rangle_{\rho} \le C^{\prime} N^{1-\gamma}.$$
Therefore, by (\ref{eq:byby0}), we have that
$$ -\langle f_{N}L_N^{0} \log f_{N}\rangle_{\rho} \le C N^{1-\gamma}.$$
Now, using the simple inequality $a(\log b -\log a)\leq 2\sqrt{a}(\sqrt{b}-\sqrt{a})$, we obtain that
$$ -\langle \sqrt{f_{N}}L_N^{0} \sqrt{f_{N}}\rangle_{\rho} \le C N^{1-\gamma}.$$
This gives the first inequality in Lemma \ref{lem:entropyproductionbound} since the left hand side of the previous inequality is equal to the left hand side of the first inequality of Lemma \ref{lem:entropyproductionbound} because $L_N^0$ is reversible with respect to $\nu_{\rho}$ for any $\rho$. Choosing now $\rho=\alpha$, and using again the simple inequality $a(\log b -\log a)\leq 2\sqrt{a}(\sqrt{b}-\sqrt{a})$, we have that
$$ -\langle \sqrt{f_{N, \alpha}}\; L_N^{\ell} \sqrt{f_{N,\alpha}}\rangle_{\alpha} \le  C^{\prime} N^{1-\gamma}.$$
Since $L_N^\ell$ is reversible with respect to $\nu_{\alpha}$ we have that 
\begin{equation*}
\begin{split}
&-\langle \sqrt{f_{N, \alpha}}\; L_N^{\ell} \sqrt{f_{N,\alpha}}\rangle_{\alpha} \\
&= \cfrac{1}{2} \sum_{x \in \Lambda_N} \sum_{y \le 0} p(y-x) \left\langle \left[ \eta_x (1-\alpha) + (1-\eta_x) \alpha \right] \left[ {\sqrt {f_{N,\alpha} (\eta^{x})}} -{\sqrt {f_{N,\alpha} (\eta)}} \right]^2\right\rangle_\alpha.
\end{split}
\end{equation*}
Since $\alpha \wedge \, 1-\alpha \le \eta_x (1-\alpha) + (1-\eta_x) \alpha $, the term above is bigger or equal to a constant times the left hand side of the second inequality of Lemma \ref{lem:entropyproductionbound}. The third inequality of Lemma \ref{lem:entropyproductionbound} is obtained similarly by choosing $\rho=\beta$. 
\end{proof}

\subsection{Proof of Theorem \ref{thm:weak-profile} }

Let ${\mc M}^+_{d}$, $d=1,2$,  be the space of positive measures on $[0,1]^d$ with total mass bounded by $1$ equipped with the weak topology. For any $\eta \in \Omega^N$ the empirical measures $\pi^N (\eta) \in {\mc M}_1^+$ (resp. ${\hat \pi}^N (\eta) \in {\mc M}_2^+$) is defined by 
\begin{equation}
{\pi}^N (\eta) =\cfrac{1}{N-1} \sum_{x=1}^{N-1} \eta_x \delta_{x/N}
\end{equation}
resp.
\begin{equation}
{\hat \pi}^N (\eta) =\cfrac{1}{(N-1)^2} \sum_{x,y=1}^{N-1} \eta_x \eta_y  \delta_{(x/N,y/N)}
\end{equation}
where $\delta_u$ (resp. $\delta_{(u,v)}$) is the Dirac mass on $u \in [0,1]$ (resp. $(u,v) \in [0,1]^2$). Let ${\PP}^N$ be the law on ${\mc M}_1^+ \times {\mc M}_2^+$ induced by $(\pi^N, {\hat \pi}^N) : \Omega^N \to {\mc M}_1^+ \times {\mc M}_2^+$ when $\Omega^N$ is equipped with the non-equilibrium stationary state $\mu_N$. To simplify notations, we denote $\pi^N (\eta)$ (resp. ${\hat \pi}^N (\eta)$) by $\pi^N$ (resp. ${\hat \pi}^N$) and the action of $\pi \in {\mc M}_d^+$ on a continuous function $H:[0,1]^d \to \RR$ by $\langle \pi , H \rangle = \int_{[0,1]^d} H(u) \pi (du)$.  

The sequence $(\PP^N)_{N\ge 2}$ is tight on ${\mc M}_1^+ \times {\mc M}_2^+$. This is obvious since it is a family of probabilities over the compact set ${\mc M}_1^+ \times {\mc M}_2^+$. Our goal is to prove that every limit point $\PP^*$ of this sequence is concentrated on the set of measures $(\pi ,{\hat \pi})$ of ${\mc M}_1^+ \times {\mc M}_2^+$ such that $\pi$ (resp. ${\hat \pi}$) is absolutely continuous with respect to the Lebesgue measure on $[0,1]$ (resp. $[0,1]^2$) and with a density  $\rho(u)$ (resp. $\rho(u) \rho(v)$) where $\rho$ is a weak solution of (\ref{eq:fhed}). 

\begin{lem}
\label{lem:ghj}
Let $\PP^*$ be a limit point of the sequence $(\PP^N)_N$. Then $\PP^*$ is concentrated on measures $(\pi,{\hat \pi} )$ such that $\pi$ (resp. $\hat \pi$) is absolutely continuous with respect to Lebesgue measure on $[0,1]$ (resp. $[0,1]^2$). The density $\rho$ of $\pi$ is a continuous function on $[0,1]$ and the density of $\hat \pi$ is equal to $\rho \otimes \rho :(x,y) \in [0,1]^2 \to \rho(x) \rho (y)$.
\end{lem}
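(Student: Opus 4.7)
The plan is to establish three properties of any limit point $\PP^*$: (i) absolute continuity of $\pi$ and $\hat\pi$ with $L^\infty$ densities; (ii) the exact factorization $\hat\pi = \pi\otimes\pi$; (iii) continuity of the density $\rho$ of $\pi$. For (i), I would use the deterministic $L^\infty$ bound on the occupation variables. Since $0 \le \eta_x \le 1$, for any non-negative $H \in C([0,1])$ one has $\langle \pi^N, H\rangle \le (N-1)^{-1}\sum_{x=1}^{N-1} H(x/N) \to \int_0^1 H(u)\,du$. Passing to the weak limit under $\PP^*$ gives $\int H\, d\pi \le \int H(u)\, du$ for every non-negative continuous $H$, so $\pi$ is absolutely continuous with a density $\rho$ satisfying $0\le\rho\le 1$ almost everywhere. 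The same argument in two variables yields absolute continuity of $\hat\pi$ with a bounded density.

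For (ii), my strategy is to observe the exact pointwise identity
\[
\hat\pi^N(A\times B) = \frac{1}{(N-1)^2}\sum_{x,y}\eta_x\eta_y\mathbf{1}_{x/N\in A}\mathbf{1}_{y/N\in B} = \pi^N(A)\pi^N(B)
\]
for any Borel $A,B \subset [0,1]$, so that $\hat\pi^N = \pi^N \otimes \pi^N$ as random elements of $\mc M_2^+$. The tensor-product map $\pi\mapsto\pi\otimes\pi$ is continuous for the weak topology, so the continuous mapping theorem gives $\hat\pi = \pi\otimes\pi$ under $\PP^*$. Writing $\pi = \rho(u)\,du$, the density of $\hat\pi$ is then $\rho(u)\rho(v)$.

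Step (iii) is the delicate point, and the place where Lemma~\ref{lem:entropyproductionbound} enters. The plan is a coarse-graining argument: introduce block averages $\bar\eta^{\ell}_x = (2\ell+1)^{-1}\sum_{|y-x|\le \ell}\eta_y$ for $1 \ll \ell \ll N$. Combining the first bound of Lemma~\ref{lem:entropyproductionbound} with a convexity argument yields a quantitative $L^1(\mu_N)$-estimate on the increments $\bar\eta^{\ell}_{x+k} - \bar\eta^{\ell}_x$ for small $|k|$, with an error that vanishes as $N\to\infty$ and then $\ell/N\to 0$. Linear interpolation of the coarse-grained field $x\mapsto \bar\eta^{\ell}_x$ produces Lipschitz approximations of $\pi^N$ whose weak limits are equicontinuous, so $\rho$ is continuous on compact subsets of $(0,1)$; continuity at the endpoints is provided by Corollary~\ref{cor:boundary} which identifies $\rho(0^+) = \alpha$ and $\rho(1^-) = \beta$. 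The main obstacle throughout (iii) is adapting the classical short-range one-block/two-block machinery of Guo--Papanicolaou--Varadhan to the long-range kernel $p(\cdot)$: one must use $\gamma > 1$ both to make the $N^{1-\gamma}$ bound useful at macroscopic scales and to control the heavy tails of $p$ in the rearrangements that appear in the coarse-graining.
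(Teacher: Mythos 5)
Your steps (i) and (ii) are fine and coincide with the paper's argument: the deterministic bound $0\le\eta_x\le 1$ gives absolute continuity with density bounded by $1$, and the exact identity $\hat\pi^N=\pi^N\otimes\pi^N$ passes to the limit. The gap is in step (iii), and it is twofold. First, your appeal to Proposition \ref{cor:boundary} for continuity at the endpoints is circular: in the paper that proposition is proved \emph{using} Lemma \ref{lem:ghj} (its proof argues ``since by Lemma \ref{lem:ghj} $\pi$ is a continuous function on $[0,1]$, if $\pi(0)\ne\alpha$ \dots''), and indeed the very statement ``$\pi(0)=\alpha$'' only makes sense once a continuous version of the density is known to exist. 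Second, the coarse-graining/two-block scheme you sketch is precisely the approach the authors single out as unavailable here: the remark following Proposition \ref{cor:boundary} states that the classical two-blocks machinery fails for the long-jump process because the entropy production bound of Lemma \ref{lem:entropyproductionbound}, of order $N^{1-\gamma}$ rather than the diffusive $N^{-1}$, is not strong enough to beat the heavy tails of $p$, even with the arguments of \cite{Jara1}. You acknowledge this adaptation as ``the main obstacle'' but give no mechanism to overcome it, so as written the key step is missing. Note also that an $L^1(\mu_N)$ estimate on increments $\bar\eta^\ell_{x+k}-\bar\eta^\ell_x$ for \emph{small} $|k|$ would not suffice anyway: equicontinuity of the macroscopic profile requires comparing block averages at macroscopically separated sites $|k|\sim\ve N$, which is exactly the two-block statement in question.

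The paper's route to continuity is different and avoids both issues: it does not use Lemma \ref{lem:entropyproductionbound} nor the boundary identification. One tests the stationary state against $N^{\gamma-1}\sum_{|x-y|\ge\ve N}F(\tfrac xN,\tfrac yN)p(y-x)(\eta_y-\eta_x)$, bounds this expectation via the relative entropy inequality with respect to a smooth product reference measure $\nu^N_{\rho(\cdot)}$ (relative entropy of order $N$), Feynman--Kac and the variational formula for the principal eigenvalue, and controls the Dirichlet form terms with Lemma \ref{lemmabulk}. Optimizing over $F$ (inserting the supremum inside the expectation as in \cite{KLO2}) yields a uniform bound on
\begin{equation*}
{\bb E}^*\left[\iint_{|u-v|\ge\ve}\cfrac{(\pi(v)-\pi(u))^2}{|u-v|^{1+\gamma}}\,du\,dv\right],
\end{equation*}
and letting $\ve\to 0$ shows that $\PP^*$-a.s.\ the density has finite $W^{\gamma/2,2}$ Gagliardo seminorm; since $\gamma>1$, the fractional Sobolev embedding (Theorem 8.2 of \cite{dPV}) then gives $\tfrac{\gamma-1}{2}$-H\"older continuity on all of $[0,1]$. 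If you want to salvage your write-up, you should replace step (iii) by an argument of this type (or supply a genuinely new two-block estimate adapted to the long-range kernel), and in any case you must not invoke Proposition \ref{cor:boundary}.
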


\begin{proof} See Appendix \ref{app:B}.
\end{proof}

With some abuse of notation we denote  by $({\PP^N})_N$ a fixed subsequence converging to a limit point $\PP^*$. A generic element of ${\mc M}_1^+ \times {\mc M}_2^+$ is denoted by $(\pi, {\hat \pi})$ with the convention that $\pi$ and $\hat \pi=\pi \otimes \pi$ denotes the probability measure as well as its density with respect to the Lebesgue measure.


\begin{prop}
\label{cor:boundary}
We have that $\PP^{*}$ almost surely ${\pi} (0) =\alpha$ and $\pi (1) =\beta$.
\end{prop}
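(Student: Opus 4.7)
The plan is to prove $\pi(0) = \alpha$ (the case $\pi(1) = \beta$ is handled symmetrically via the third inequality of Lemma \ref{lem:entropyproductionbound} applied with $f_{N,\beta}$). Since Lemma \ref{lem:ghj} supplies a continuous density $\rho$, it suffices to show $\EE^{\PP^*}[(\rho(0) - \alpha)^2] = 0$. Fix a non-negative continuous $H_\ve:[0,1]\to\RR$ supported in $[0,\ve]$ with $\int_0^1 H_\ve(u) du = 1$ and $\|H_\ve\|_\infty \le 2/\ve$. Continuity of $\rho$ gives $\int H_\ve \, d\pi = \int H_\ve \rho \to \rho(0)$ $\PP^*$-a.s., so by bounded convergence the claim reduces to
\[ \lim_{\ve \to 0} \EE^{\PP^*}\Bigl[\bigl(\textstyle\int H_\ve \, d\pi - \alpha\bigr)^{2}\Bigr] = 0. \]

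The first ingredient is a reverse Dirichlet-form estimate at one and two sites. For $x \in \Lambda_N$, let $f_i(\eta^{\neq x}) := f_{N,\alpha}(\eta)|_{\eta_x = i}$ ($i \in \{0, 1\}$). The product structure of $\nu_\alpha$ yields, for any $\phi$ depending only on $\eta^{\neq x}$,
\[ \langle (\eta_x - \alpha)\phi\rangle_{\mu_N} = \alpha(1-\alpha) \int \phi\, (f_1 - f_0)\, d\nu_\alpha^{\neq x}. \]
Factoring $f_1 - f_0 = (\sqrt{f_1} - \sqrt{f_0})(\sqrt{f_1} + \sqrt{f_0})$ and applying Cauchy--Schwarz with the uniform estimate $\int(\sqrt{f_1} + \sqrt{f_0})^{2} d\nu_\alpha^{\neq x} \le 2/[\alpha(1-\alpha)]$ (consequence of $\int f_{N,\alpha}\, d\nu_\alpha = 1$) gives
\[ |\langle (\eta_x - \alpha)\phi\rangle_{\mu_N}| \le C\,\|\phi\|_\infty\, \sqrt{D_x(\sqrt{f_{N,\alpha}})}, \quad D_x(g) := \int [g(\eta^x) - g(\eta)]^2 d\nu_\alpha. \]
Specialising to $\phi \equiv 1$ gives $|\langle \eta_x\rangle_{\mu_N} - \alpha| \le C \sqrt{D_x}$; taking $\phi = \eta_y - \alpha$ (bounded by $1$, for $y \neq x$) gives $|\langle (\eta_x-\alpha)(\eta_y-\alpha)\rangle_{\mu_N}| \le C\sqrt{D_x}$.

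The Dirichlet-form bound $\sum_{x \in \Lambda_N} r_N^-(x/N) D_x(\sqrt{f_{N,\alpha}}) \le C N^{1-\gamma}$ from Lemma \ref{lem:entropyproductionbound}, combined with the lower bound $r_N^-(x/N) \ge c\,x^{-\gamma}$ coming from the first item of Lemma \ref{lem:app} (see also Remark \ref{rem:refcon}), gives by Cauchy--Schwarz
\[ \sum_{x=1}^{\lfloor \ve N\rfloor} \!\!\sqrt{D_x} \le \Bigl(\sum_{x \in \Lambda_N} \!\!r_N^-\bigl(\tfrac{x}{N}\bigr) D_x\Bigr)^{\!1/2} \!\Bigl(\sum_{x=1}^{\lfloor \ve N\rfloor} \tfrac{1}{r_N^-(x/N)}\Bigr)^{\!1/2} \!\le C N^{\frac{1-\gamma}{2}}(\ve N)^{\frac{\gamma+1}{2}} \!= C \ve^{\frac{\gamma+1}{2}} N. \]
Plugging this into the two-site estimate, treating the diagonal $x=y$ by $|\eta_x - \alpha|^{2} \le 1$, and using $\|H_\ve\|_\infty \le 2/\ve$ with $\sum_x H_\ve(x/N) = O(N)$ yields
\[ \EE^{\mu_N}\Bigl[\bigl(\textstyle\int H_\ve\, d\pi^N - \alpha\bigr)^{2}\Bigr] \le \tfrac{C}{\ve N} + C \ve^{(\gamma - 1)/2}, \]
which vanishes in the iterated limit ($N \to \infty$, then $\ve \to 0$) precisely because $\gamma > 1$.

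Finally, the functional $\pi \mapsto (\int H_\ve\, d\pi - \alpha)^{2}$ is continuous and bounded on $\mathcal{M}_1^+$; via Lemma \ref{lem:ghj}'s identity $\hat\pi = \pi \otimes \pi$ it coincides $\PP^*$-a.s. with $\int H_\ve \otimes H_\ve\, d\hat\pi - 2\alpha \int H_\ve\, d\pi + \alpha^{2}$, a continuous bounded functional of $(\pi, \hat\pi)$ which matches the empirical expression exactly (since $\eta_x^{2} = \eta_x$). Weak convergence $\PP^N \to \PP^*$ therefore gives
\[ \EE^{\PP^*}\Bigl[\bigl(\textstyle\int H_\ve\, d\pi - \alpha\bigr)^{2}\Bigr] = \lim_N \EE^{\mu_N}\Bigl[\bigl(\textstyle\int H_\ve\, d\pi^N - \alpha\bigr)^{2}\Bigr] \le C \ve^{(\gamma-1)/2} \xrightarrow[\ve \to 0]{} 0, \]
which closes the reduction of the first paragraph. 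The hard part will be the reverse Dirichlet-form bound with the $\sqrt{D_x}$ factor (rather than $D_x$): it requires the $\sqrt{a}-\sqrt{b}$ factorisation combined with a careful Cauchy--Schwarz, after which the whole argument hinges on the inverse-weight sum $\sum_{x \le \ve N} x^\gamma \sim (\ve N)^{\gamma+1}$ producing the strictly positive exponent $(\gamma-1)/2$ in $\ve$, precisely where the boundary reservoir rate $r_N^-$ is large.
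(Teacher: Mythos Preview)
Your argument is correct and follows a genuinely different route from the paper's proof.

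The paper proceeds via the variational characterization of the Dirichlet form: it introduces the exponential test function $u=\exp\big(\lambda\sum_{z\in B}\eta_z\big)$ with $B=\{[N\ve],\ldots,N-1\}$, computes $-L_N^{\ell}u/u$, and uses $\langle -u^{-1}L_N^{\ell}u\, ,\, f_{N,\alpha}\rangle_\alpha \le D_N^{\ell}(\sqrt{f_{N,\alpha}})$ together with Lemma~\ref{lem:entropyproductionbound} to obtain
\[
\lim_{\ve\to 0}\ \ve^{\gamma-1}\int_\ve^1 \frac{\pi(q)-\alpha}{q^{\gamma}}\,dq = 0\quad \PP^*\text{-a.s.},
\]
from which $\pi(0)=\alpha$ follows by a contradiction argument using the continuity of $\pi$ and the divergence of $\int_\ve^1 q^{-\gamma}\,dq$ as $\ve\to 0$. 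Your approach bypasses the variational formula entirely: you control the one- and two-point functions $\langle\eta_x-\alpha\rangle_{\mu_N}$ and $\langle(\eta_x-\alpha)(\eta_y-\alpha)\rangle_{\mu_N}$ directly by the single-site Dirichlet form $D_x$ through the factorisation $f_1-f_0=(\sqrt{f_1}-\sqrt{f_0})(\sqrt{f_1}+\sqrt{f_0})$ and Cauchy--Schwarz, and then sum over $x\le \ve N$ using a second Cauchy--Schwarz against the weight $1/r_N^{-}(x/N)\le C x^{\gamma}$. This yields the quantitative bound $\EE^{\PP^*}\big[(\int H_\ve\,d\pi-\alpha)^2\big]\le C\ve^{(\gamma-1)/2}$, i.e.\ convergence in $L^2(\PP^*)$ rather than through a contradiction. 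Your route is more elementary (no exponential test function, no variational formula) and gives an explicit rate in $\ve$; the paper's route works purely at the level of first moments and concentrates the weight $r_N^{-}$ near the boundary via the singular integral, which may transfer more readily to settings where two-point control is unavailable. A minor remark: the lower bound $r_N^{-}(x/N)\ge c\,x^{-\gamma}$ you invoke is not literally the statement of Lemma~\ref{lem:app} (which gives uniform convergence on compacts), but it follows immediately from the integral comparison $\sum_{y\ge x}y^{-(1+\gamma)}\ge\int_x^\infty y^{-(1+\gamma)}\,dy$.
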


\begin{proof}

For small $\ve>0$ and small $\lambda \in \RR$, let $B$ be the box $B:=\{ [N\ve], \ldots, N-1\}$ in $\Lambda_{N}$ and let $u$ be the function defined by 
\begin{equation*}
u=e^{\lambda \sum_{x \in B} \eta_x}.
\end{equation*} 
We recall that the action of the generator $L_N^{\ell}$ on a function $f :\Omega_N \to \RR$ can be rewritten as 
\begin{equation*}
(L_N^\ell f)(\eta) = \sum_{z \in \Lambda_N} r_N^{-} \big( \tfrac{z}{N} \big) \big[ \eta_z (1-\alpha) + (1-\eta_z) \alpha \big] \big[ f(\eta^z) -f(\eta) \big]
\end{equation*}
where $r_N^{-} \big( \tfrac{z}{N} \big) = \sum_{y \ge z} p(y)$. An elementary computation shows that 
\begin{equation}
\label{eq:mil}
\begin{split}
- \cfrac{L_N^\ell u}{u} &= \left[ (e^\lambda -1) -2(1-\alpha)(\cosh \lambda -1) \right] \sum_{z \in B}  r_N^{-} \big( \tfrac{z}{N} \big) (\eta_z -\alpha)\\
&-2 \alpha (1-\alpha) (\cosh \lambda -1) \sum_{z \in B}  r_N^{-} \big( \tfrac{z}{N} \big).
\end{split}
\end{equation}
Multiplying (\ref{eq:mil}) by $f_{N, \alpha}$, integrating w.r.t. $\nu_{\alpha}$ and using the variational formula of the Dirichlet form we deduce that
\begin{equation}
\begin{split}
&\left[ (e^\lambda -1) -2(1-\alpha)(\cosh \lambda -1) \right] \sum_{z \in B}  r_N^{-} \big( \tfrac{z}{N} \big) (\langle \eta_z \rangle_N  -\alpha)\\
&\le \sum_{z \in \Lambda_N}  r_N^{-} \big( \tfrac{z}{N} \big) \left\langle \left[ \sqrt{f_{N,\alpha} (\eta^z)} - \sqrt{f_{N,\alpha} (\eta)} \right]^2 \right\rangle_{\alpha}\\
&+2 \alpha (1-\alpha) (\cosh \lambda -1) \sum_{z \in B}  r_N^{-} \big( \tfrac{z}{N} \big)\\
& \le CN^{1-\gamma} + 2 \alpha (1-\alpha) (\cosh \lambda -1) \sum_{z \in B}  r_N^{-} \big( \tfrac{z}{N} \big)
\end{split}
\end{equation}
where the last inequality is a consequence of Lemma \ref{lem:entropyproductionbound}. Observe that for $\lambda \to 0$, the term $(e^\lambda -1) -2(1-\alpha)(\cosh \lambda -1)$ is equivalent to $\lambda$ and has therefore the sign of $\lambda$ for sufficiently small $\lambda$. The term $\cosh \lambda -1$ is of order $\lambda^2$. 
Assume first that $\lambda>0$ is small. Then there exists a constant $C>0$ independent of $\lambda, \ve$ and $N$ such that
\begin{equation}
\begin{split}
&\mu_N \left(  \langle \pi_N - \alpha \, , \, N^{\gamma} \, {\bf 1}_{[\ve ,1]} \big(\tfrac{z}{N} \big) \,r_N^{-} \big( \tfrac{z}{N} \big) \rangle \right) = N^{\gamma-1} \sum_{z \in B}  r_N^{-} \big( \tfrac{z}{N} \big) (\langle \eta_z \rangle_N  -\alpha)\\
&\le \cfrac{C}{\lambda} + C \lambda N^{-1} \sum_{z \in B} N^{\gamma} r_N^{-} \big( \tfrac{z}{N} \big).
\end{split}
\end{equation}
By Lemma \ref{lem:app} we have that for some constant $C>0$
\begin{equation}
N^{-1} \sum_{z \in B} N^{\gamma} r_N^{-} \big( \tfrac{z}{N} \big) \le C \int_{\ve}^{1} q^{-\gamma} dq = {\mc O} (\ve^{1-\gamma}).
\end{equation}
Therefore we conclude that
\begin{equation}
\limsup_{\ve \to 0} \ve^{\gamma -1} \limsup_{N \to \infty} \mu_N \left(  \langle \pi_N - \alpha \, , \, {\bf 1}_{[\ve ,1]} \big(\tfrac{z}{N} \big) N^{\gamma} r_N^{-} \big( \tfrac{z}{N} \big) \rangle \right) \le 0.
\end{equation}
Similarly, by considering small $\lambda <0$, we deduce that
\begin{equation}
\liminf_{\ve \to 0} \ve^{\gamma -1} \liminf_{N \to \infty} \mu_N \left(  \langle \pi_N - \alpha \, , \, {\bf 1}_{[\ve ,1]} \big(\tfrac{z}{N} \big) N^{\gamma} r_N^{-1} \big( \tfrac{z}{N} \big) \rangle \right) \ge 0.
\end{equation}
By using Lemma \ref{lem:app} we deduce that $\PP^*$ a.s. we have 
\begin{equation}
\lim_{\ve \to 0} \ve^{\gamma -1} \int_{\ve}^1 \cfrac{\pi (q) - \alpha}{q^\gamma} \, dq =0.
\end{equation}
But since by Lemma \ref{lem:ghj} $\pi$ is a continuous function on $[0,1]$, if $\pi(0) \ne \alpha$, we have that 
$$\lim_{\ve \to 0} \ve^{\gamma -1} \int_{\ve}^1 \cfrac{\pi (q) - \alpha}{q^\gamma} \, dq=\cfrac{\pi(0) -\alpha}{\gamma -1} \ne 0$$
and we get a contradiction. We deduce thus that $\pi(0)=\alpha$. Similarly $\pi (1) =\beta$.
\end{proof}

\begin{rem}
The usual proof for driven diffusive systems of this proposition is quite different and based on the so-called two-blocks estimate (\cite{ELS}, \cite{KLO2}). It turns out that in the context of exclusion process with long jumps this approach does not work since the control of the entropy production is not sufficient to cancel the heavy tails of $p$, even by using the arguments of \cite{Jara1}. 
 \end{rem}

\begin{lem}
\label{lem:statprof}
Let $\bar \rho$ be the unique weak solution of (\ref{eq:fhed}). For any $F,G$ in $C_c^{\infty} ([0,1])$ we have
\begin{equation}
\int_{[0,1]^2} \left[ G(u) ((-\Delta)^{\gamma/2} F) (v) \, + \, F(v) ((-\Delta)^{\gamma/2}G) (u) \right] I(u,v) du dv =0
\end{equation}
where
\begin{equation}
I(u,v) = {\bb E}^* \left[ (\pi (u) -{\bar \rho} (u)) (\pi (v) -{\bar \rho} (v)) \right].
\end{equation}
\end{lem}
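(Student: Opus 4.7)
The plan is to adapt the Kipnis--Landim--Olla approach \cite{KLO} to the fractional setting. Set $\tilde\eta_z = \eta_z - \bar\rho(z/N)$ for $z \in \Lambda_N$, extended by $0$ outside $\Lambda_N$ (consistent with $\eta_z = \alpha = \bar\rho(z/N)$ for $z \le 0$ and similarly for $z \ge N$), and introduce the linear functionals
\[
f(\eta) = \sum_{x \in \Lambda_N} F(x/N)\,\tilde\eta_x, \qquad g(\eta) = \sum_{y \in \Lambda_N} G(y/N)\,\tilde\eta_y.
\]
By stationarity $\mu_N(L_N(fg)) = 0$, and the derivation rule gives $L_N(fg) = f L_N g + g L_N f + \Gamma_N(f, g)$, where $\Gamma_N$ denotes the carr\'e du champ. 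I would multiply the resulting identity by $N^\gamma/(N-1)^2$ and pass to the limit: the main term should converge to the integral appearing in the statement (up to sign and a relabelling of variables), while the carr\'e du champ and a deterministic remainder should vanish.

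For the main term, write $L_N \eta_x = (\mc K_N \tilde\eta)(x/N) + (\mc K_N^{\mathrm{ext}}\bar\rho^{\mathrm{ext}})(x)$, where $\bar\rho^{\mathrm{ext}}$ is the extension of $\bar\rho$ by $\alpha, \beta$ off $(0,1)$ and $\mc K_N^{\mathrm{ext}}$ is the natural extension of $\mc K_N$ to $\ZZ$. Since $\tilde\eta^{\mathrm{ext}}$ is compactly supported on $\Lambda_N$, the self-adjointness identity
\[
\sum_x F(x/N)(\mc K_N \tilde\eta)(x/N) = \sum_z \tilde\eta_z (\mc K_N F)(z/N),
\]
together with its counterpart for $G$, yields
\[
\mu_N(fL_N g + gL_N f) = \sum_{x, z}\!\bigl[F(x/N)(\mc K_N G)(z/N) + G(x/N)(\mc K_N F)(z/N)\bigr]\mu_N[\tilde\eta_x \tilde\eta_z] + D_N,
\]
with $D_N := \mu_N(f)\sum_y G(y/N)(\mc K_N^{\mathrm{ext}}\bar\rho^{\mathrm{ext}})(y) + \mu_N(g)\sum_x F(x/N)(\mc K_N^{\mathrm{ext}}\bar\rho^{\mathrm{ext}})(x)$. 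Multiplying by $N^\gamma/(N-1)^2$, using Lemma \ref{lem:app} ($N^\gamma \mc K_N H \to -(-\Delta)^{\gamma/2} H$ uniformly on $\mathrm{supp}(H)$) and Lemma \ref{lem:ghj} (so that $(N-1)^{-2}\sum A(x/N,z/N)\mu_N[\tilde\eta_x\tilde\eta_z]$ tends to $\bb E^*\!\int A(u,v) I(u,v)\,du\,dv$ for continuous $A$), the main piece converges to
\[
-\int_{[0,1]^2}\!\bigl[F(u)((-\Delta)^{\gamma/2}G)(v) + G(u)((-\Delta)^{\gamma/2}F)(v)\bigr] I(u,v)\,du\,dv,
\]
which equals the negative of the stated integral upon swapping $u \leftrightarrow v$ in one term and using $I(u,v)=I(v,u)$.

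It remains to verify that $\Gamma_N(f,g)$ and $D_N$ vanish after the same scaling. The bulk carr\'e computes to
\[
\Gamma_N^0(f, g) = \tfrac{1}{2}\sum_{a, b \in \Lambda_N} p(a-b)\bigl[F(a/N) - F(b/N)\bigr]\bigl[G(a/N) - G(b/N)\bigr](\eta_a-\eta_b)^2.
\]
The Lipschitz bounds $|F(a/N)-F(b/N)| \le \|F'\|_\infty |a-b|/N$ (and similarly for $G$), together with $(a-b)^2 p(a-b) = c_\gamma |a-b|^{1-\gamma}$ and the estimate $\sum_{k=1}^{N-1} k^{1-\gamma} = O(N^{2-\gamma})$ valid for $1<\gamma<2$, give $|\mu_N(\Gamma_N^0(f,g))| = O(N^{1-\gamma})$; after normalization this is $O(N^{-1}) \to 0$. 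The boundary terms $\Gamma_N^r + \Gamma_N^\ell$ concentrate on single sites and are controlled using $r_N^\pm(x/N) \le C N^{-\gamma}$ on $\mathrm{supp}(F) \cup \mathrm{supp}(G)$ (again Lemma \ref{lem:app}), also yielding $O(N^{-1})$.

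The main obstacle is the control of $D_N$, which reduces to showing that $N^\gamma (\mc K_N^{\mathrm{ext}}\bar\rho^{\mathrm{ext}})(x) \to 0$ uniformly for $x/N$ in a compact subset of $(0,1)$. The pointwise limit vanishes because $(-\Delta)^{\gamma/2}\bar\rho \equiv 0$ on $(0,1)$; the subtlety is that $\bar\rho^{\mathrm{ext}}$ is only $\gamma/2$-H\"older at the endpoints of $[0,1]$. However, this regularity is exactly what makes the singular integral defining $(-\Delta)^{\gamma/2}\bar\rho$ convergent at interior points, and a truncated Riemann-sum argument in the spirit of Lemma \ref{lem:app} (splitting the sum into a bulk part where $\bar\rho$ is smooth and a tail part controlled by the decay of $p$) then provides the required uniform convergence. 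Combining the three estimates and passing to the limit in $0 = N^\gamma \mu_N(L_N(fg))/(N-1)^2$ yields the identity.
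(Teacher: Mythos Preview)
Your approach is correct and genuinely different from the paper's. The key differences are: (i) you center around $\bar\rho$ at the discrete level and work with $\tilde\eta$, whereas the paper works with $\eta$ directly and only invokes the weak equation for $\bar\rho$ after passing to the limit; (ii) you bound the carr\'e du champ directly via the Lipschitz estimate $|F(a/N)-F(b/N)|\le C|a-b|/N$, which indeed gives $O(N^{-1})$ after scaling, while the paper introduces a cutoff $H_\delta$ vanishing near the diagonal (so that the test function $J_\delta(u,v)=F(u)G(v)H_\delta(v-u)$ kills the short-range part of the carr\'e du champ) and only removes $\delta$ after the $N\to\infty$ limit; (iii) your remainder $D_N$ requires the quantitative statement that $N^\gamma(\mc K_N^{\mathrm{ext}}\bar\rho^{\mathrm{ext}})\to 0$ uniformly on compacts of $(0,1)$, which, as you note, needs a Riemann-sum argument exploiting the interior smoothness of $\bar\rho$ and its boundedness off $(0,1)$.

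What each approach buys: your route is conceptually more direct (center first, then everything is a perturbation of zero) and avoids the auxiliary limit $\delta\to 0$; the paper's route is technically lighter because it never applies the discrete operator to $\bar\rho$ and therefore uses only the \emph{weak} characterization of $\bar\rho$ (no regularity beyond continuity is needed). In fact your $D_N$ can also be handled that way: by discrete self-adjointness, $\sum_y G(y/N)(\mc K_N^{\mathrm{ext}}\bar\rho^{\mathrm{ext}})(y)=\sum_{y\in\ZZ}\bar\rho^{\mathrm{ext}}(y/N)(\mc K_N G)(y/N)$, and splitting $y\le 0$, $y\in\Lambda_N$, $y\ge N$ turns this into $\sum_{y\in\Lambda_N}\bar\rho(y/N)(\mc K_N G)(y/N)+\alpha\sum_z G(z/N)r_N^-(z/N)+\beta\sum_z G(z/N)r_N^+(z/N)$; after multiplying by $N^\gamma/(N-1)$ and using Lemma~\ref{lem:app} this converges to $-\langle\bar\rho,(-\Delta)^{\gamma/2}G\rangle+\langle\alpha r^-+\beta r^+,G\rangle=0$ by the weak equation. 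This sidesteps the H\"older-at-the-boundary issue entirely. One small point you should also record: Lemma~\ref{lem:app} states uniform convergence of $N^\gamma\mc K_N G$ only on $[a,1-a]$, but your main term sums over all $z\in\Lambda_N$; the extension to all of $[0,1]$ is easy (for $z/N$ outside $\mathrm{supp}(G)$ there is no principal value and the Riemann sum is nonsingular), yet it should be stated.
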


\begin{proof}
We have that
\begin{equation}
\label{eq:12345}
\begin{split}
&L_N ( \langle \pi^N, F \rangle ) = \cfrac{1}{N-1} \sum_{x\in \Lambda_N} \sum_{y \in \ZZ} F(\tfrac{x}{N}) p(y-x) (\eta_y -\eta_x) \\
&=  \langle \pi^N, {\mc K}_N F \rangle + \cfrac{\alpha}{N-1} \sum_{x \in \Lambda_N}  (F r_N^-)(\tfrac{x}{N}) + \cfrac{\beta}{N-1}  \sum_{x \in \Lambda_N}  (F r_N^+)(\tfrac{x}{N})
\end{split}
\end{equation}
where 
$${\mc K}_N F = {\mc L}_N F - F r_N^- - F r_N^+.$$

We then multiply (\ref{eq:12345}) by $N^\gamma$ and take the expectation with respect to $\mu_N$ on both sides, the left hand side being then equal to $0$ by stationarity. By using Lemma \ref{lem:app} and weak convergence we conclude that
\begin{equation*}
{\bb E}^* \left[ \int_0^1 \left\{  {\bb L} F - r^{-} F - r^+ F \right\}  (x) \;  \pi (x) dx  \right] +\int_0^1 \left\{  \alpha r^{-} F + \beta  r^+ F \right\}  (x) \;  dx = 0.
\end{equation*}

We compute now $L_N (\langle {\hat \pi}^N, J\rangle)$ where $J:[0,1]^2 \to \RR$ is a smooth test function with compact support strictly included in $[0,1]^2$ and which is identically equal to $0$ on the diagonal. Consider a small $\delta>0$ and take a smooth even function $H_\delta : \RR \to [0,1]$ which is equal to $0$ on $[-\delta, \delta]$ and equal to $1$ outside of $[-2\delta, 2 \delta]$. Let then $J_\delta (u,v) = F(u) G(v) H_{\delta} (v-u)$, $(u,v) \in [0,1]^2$.   

For $u \in [0,1]$ we denote by $F_{\delta,u}, G_{\delta, u}$ the functions given by
\begin{equation*}
\begin{split}
& F_{\delta,u} (v) = F(v)H_{\delta} (v-u), \quad  G_{\delta,u} (v) = G(v)H_{\delta} (v-u).
\end{split}
\end{equation*}

By using Lemma \ref{lem:compA} we get that
\begin{equation}
\label{eq:sou}
\begin{split}
L_N (\langle {\hat \pi}^N, J_\delta\rangle)&= \cfrac{1}{N-1} \sum_{x \in \Lambda_N} \eta_x F(\tfrac{x}{N})  \langle \pi^N, {\mc K}_N G_{\delta, x/N} \rangle\\
& + \cfrac{1}{N-1} \sum_{x \in \Lambda_N} \eta_x G(\tfrac{x}{N})  \langle \pi^N, {\mc K}_N F_{\delta, x/N} \rangle\\
&+ \cfrac{\alpha}{N-1} \sum_{x \in \Lambda_N} \eta_x G(\tfrac{x}{N})  \left\{ \cfrac{1}{N-1} \sum_{y \in \Lambda_N} F_{\delta, x/N} (\tfrac{y}{N}) r_N^{-} (\tfrac{y}{N}) \right\}\\
&+ \cfrac{\alpha}{N-1} \sum_{x \in \Lambda_N} \eta_x F(\tfrac{x}{N})  \left\{ \cfrac{1}{N-1} \sum_{y \in \Lambda_N} G_{\delta, x/N} (\tfrac{y}{N}) r_N^{-} (\tfrac{y}{N}) \right\}\\
&+ \cfrac{\beta}{N-1} \sum_{x \in \Lambda_N} \eta_x G(\tfrac{x}{N})  \left\{ \cfrac{1}{N-1} \sum_{y \in \Lambda_N} F_{\delta, x/N} (\tfrac{y}{N}) r_N^{+} (\tfrac{y}{N}) \right\}\\
&+ \cfrac{\beta}{N-1} \sum_{x \in \Lambda_N} \eta_x F(\tfrac{x}{N})  \left\{ \cfrac{1}{N-1} \sum_{y \in \Lambda_N} G_{\delta, x/N} (\tfrac{y}{N}) r_N^{+} (\tfrac{y}{N}) \right\}\\
&-\cfrac{1}{(N-1)^2} \sum_{x,y \in \Lambda_N} p(y-x) (\eta_y -\eta_x)^2 J_{\delta} (\tfrac xN \tfrac yN).
 \end{split}
\end{equation}

Since $J_{\delta} (u,v)$ is equal to $0$ for $|u -v| \le \delta$, we have that
\begin{equation*}
 N^{\gamma} \mu_N \left( \cfrac{-1}{(N-1)^2} \sum_{x,y \in \Lambda_N} p(y-x) (\eta_y -\eta_x)^2 J_{\delta} (\tfrac xN \tfrac yN)\right) = {\mc O} (N^{-1}).
\end{equation*} 

We multiply (\ref{eq:sou}) by $N^\gamma$ and take the expectation with respect to $\mu_N$ on both sides, the left hand side being then equal to $0$ by stationarity. By using Lemma \ref{lem:app} and weak convergence we conclude that
\begin{equation*}
\begin{split}
&-{\bb E}^* \left[ \int_{[0,1]^2} \left\{ G(u) ( (-\Delta)^{\gamma/2} F_{\delta,u}) (v) \, + \, F(v) ((-\Delta)^{\gamma/2} G_{\delta,v}) (u) \right\}  \pi(u) \pi (v) du dv \right] \\
&+ {\bb E}^* \left[ \int_{[0,1]^2} \left\{ G(u) \, \alpha r^{-} (v) F_{\delta,u} (v) \, + \, G(u) \, \beta r^+(v) F_{\delta,u} (v) \right\}  \pi(u) du dv \right]\\
&+ {\bb E}^* \left[ \int_{[0,1]^2} \left\{ F(u) \, \alpha r^{-} (v) G_{\delta,u} (v) \, + \, F(u) \, \beta r^+(v) G_{\delta,u} (v) \right\}  \pi(u) du dv \right]=0.
\end{split} 
\end{equation*}

We can take the limit $\delta \to 0$ and since $H_{\delta}$ converges to the function identically equal to $1$, we get 
\begin{equation*}
\begin{split}
&-{\bb E}^* \left[ \int_{[0,1]^2} \left\{ G(u) ( (-\Delta)^{\gamma/2} F) (v) \, + \, F(v) ((-\Delta)^{\gamma/2} G) (u) \right\}  \pi(u) \pi (v) du dv \right] \\
&+ {\bb E}^* \left[ \int_{[0,1]^2} \left\{ G(u) \, \alpha r^{-} (v) F (v) \, + \, G(u) \, \beta r^+(v) F (v) \right\}  \pi(u) du dv \right]\\
&+ {\bb E}^* \left[ \int_{[0,1]^2} \left\{ F(u) \, \alpha r^{-} (v) G (v) \, + \, F(u) \, \beta r^+(v) G (v) \right\}  \pi(u) du dv \right]=0.
\end{split} 
\end{equation*}

We have also proved that for any smooth compactly supported function $H$
\begin{equation*}
-{\bb E}^* \left[ \int_{0}^1 ((-\Delta)^{\gamma/2}  H) (u)  \pi(u)  du \right] +\int_0^1 \left\{  \alpha r^{-} H + \beta  r^+ H \right\}  (u) \;  du=0.
\end{equation*}
Let $\bar \rho$ be the unique weak solution of (\ref{eq:fhed}). Then we have
\begin{equation*}
-\int_{0}^1 ((-\Delta)^{\gamma/2}  H) (u)  {\bar \rho}(u)  du +\int_0^1 \left\{  \alpha r^{-} H + \beta  r^+ H \right\}  (u) \;  du=0.
\end{equation*}
It follows that
\begin{equation}
\int_{[0,1]^2} \left[ G(u) ((-\Delta)^{\gamma/2} F) (v) \, + \, F(v) ((-\Delta)^{\gamma/2}G) (u) \right] I(u,v) du dv =0.
\end{equation}
\end{proof}

Since $\PP^*$ almost surely $\pi (0) = \bar \rho (0) =\alpha$ and $\pi (1) = \bar\rho (1) =\beta$ and that $\pi, \bar \rho$ are continuous functions, by extending then to $\RR$ by $\pi (x) ={\bar \rho} (x) =\alpha$ if $x \le 0$ and ${\pi} (x) ={\bar \rho} (x) =\beta$ if $x \geq 1$, we get that for any $F,G$ in $C_c^{\infty} ([0,1]^2)$, 
\begin{equation}
\int_{\RR^2} \left[ G(u) ((-\Delta)^{\gamma/2} F) (v) \, + \, F(v) ((-\Delta)^{\gamma/2}G) (u) \right] I(u,v) du dv =0
\end{equation}
By using Theorem 3.12 in \cite{BB} we deduce that $I$ is a.s. constant with respect to Lebesgue measure on $[0,1]^2$. Since by Proposition \ref{cor:boundary}, we have $I(0,0)=I(1,1)=0$, we deduce that $I$ is identically equal to $0$. Thus $\PP^*$ almost surely $\pi ={\bar \rho}$.

We have proved
\begin{prop}
The sequence $(\PP^N)_N$ converges in law to the delta measure concentrated on 
$$({\bar \rho} (x) dx, {\bar \rho} (x) {\bar \rho} (y) dx dy)$$
where $\bar\rho$ is the unique weak solution of (\ref{eq:fhed}).
\end{prop}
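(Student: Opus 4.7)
The strategy is to combine the three preceding results (Lemma \ref{lem:ghj}, Proposition \ref{cor:boundary}, Lemma \ref{lem:statprof}) to show that every subsequential limit $\PP^*$ of the tight family $(\PP^N)_N$ is concentrated on the single pair $({\bar \rho}(x)\,dx,\,{\bar \rho}(x){\bar \rho}(y)\,dx\,dy)$. Once this uniqueness of limit points is established, convergence of the whole sequence follows since tightness on the compact space ${\mc M}_1^+ \times {\mc M}_2^+$ is automatic.

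First, I would invoke Lemma \ref{lem:ghj} to reduce attention to pairs $(\pi,\hat\pi)$ with $\pi(du)=\rho(u)\,du$ for a continuous density $\rho$ and $\hat\pi=\pi\otimes\pi$. Then Proposition \ref{cor:boundary} pins the boundary values $\rho(0)=\alpha$ and $\rho(1)=\beta$ $\PP^*$-almost surely. It therefore suffices to show that $\rho$ coincides with ${\bar \rho}$, the unique weak solution of (\ref{eq:fhed}), $\PP^*$-a.s. on $[0,1]$; the statement for $\hat\pi$ then follows from the product structure supplied by Lemma \ref{lem:ghj}.

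The heart of the argument is to exploit the bilinear identity furnished by Lemma \ref{lem:statprof} applied to the covariance-like quantity
\begin{equation*}
I(u,v) = {\bb E}^*\left[(\pi(u)-{\bar \rho}(u))(\pi(v)-{\bar \rho}(v))\right].
\end{equation*}
Here I would extend both $\pi$ and ${\bar \rho}$ to all of $\RR$ by the constants $\alpha$ on $(-\infty,0]$ and $\beta$ on $[1,\infty)$, which is legitimate because their boundary values agree and the reservoir terms $r^\pm$ in Lemma \ref{lem:statprof} are exactly what non-locality of $(-\Delta)^{\gamma/2}$ contributes from outside $[0,1]$. Then $I$ vanishes off $[0,1]^2$ and the identity extends to test functions in $C_c^{\infty}(\RR)$, placing $I$ in the hypotheses of Theorem 3.12 of \cite{BB}. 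That theorem identifies $I$ almost everywhere with a $\gamma/2$-harmonic function on $(0,1)^2$; since $I(0,0)=I(1,1)=0$ by the already-established boundary behavior, harmonicity forces $I \equiv 0$. In particular the diagonal value $I(u,u)={\bb E}^*[(\pi(u)-{\bar \rho}(u))^2]$ vanishes, so $\pi={\bar \rho}$ $\PP^*$-a.s.

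The main obstacle, and the place where the argument is most delicate, is the extension step: one must check that plugging test functions supported in $\RR^2 \setminus [0,1]^2$ into the extended identity really gives no new information, so that $I$ can be treated as a genuine solution of the fractional Laplace equation in the sense required by \cite{BB}. This is exactly where the restricted (as opposed to spectral) interpretation of the Dirichlet boundary conditions for $(-\Delta)^{\gamma/2}$ becomes essential, and it is the counterpart, at the level of fluctuations, of the reservoir structure of $L_N^{\ell}+L_N^r$. Once this is in place, uniqueness of the limit point and compactness yield convergence of the full sequence.
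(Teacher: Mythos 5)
Your proposal follows essentially the same route as the paper: reduce via Lemma \ref{lem:ghj} and Proposition \ref{cor:boundary}, extend $\pi$ and $\bar\rho$ by $\alpha$ and $\beta$ outside $[0,1]$, feed the extended bilinear identity of Lemma \ref{lem:statprof} into Theorem 3.12 of \cite{BB}, and use the vanishing of $I$ at the corners to conclude $I\equiv 0$, hence $\pi=\bar\rho$ and convergence of the full sequence by tightness and uniqueness of limit points. The only cosmetic difference is that the paper phrases the conclusion of Theorem 3.12 as $I$ being a.e.\ constant on $[0,1]^2$ rather than $\gamma/2$-harmonic, but the ensuing logic is identical.
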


Theorem \ref{thm:weak-profile} is a trivial consequence of this proposition. 

\subsection{Proof of Fick's law}

Let us define for $z \in \Lambda_N$ 
$$ {\tilde  r}_N^- \big(\tfrac{z}{N}\big) = \sum_{y\ge z} y p(y), \quad  {\tilde  r}_N^+ \big(\tfrac{z}{N}\big) = - \sum_{y \le z-N} y p(y)$$
which are, up to  a multiplicative constant, defined as $r_N^{\pm}$ with $\gamma$ replaced by $\gamma-1 \in (0,1)$.
Recalling (\ref{eq:WWW}) we see that
\begin{equation*}
N^{\gamma -1} \langle W_1 \rangle_N= \mu_N \left( \, \langle  \pi^N, \varphi_N \rangle\, \right)  +N^{\gamma -1} \theta_N
\end{equation*}
where $\varphi_N:(0,1) \to \RR$ defined by
\begin{equation*}
\begin{split}
\varphi_N(\tfrac{z}{N}) &= - N^{\gamma} \sum_{y \le 0} \tfrac{z}{N} \, p(z-y) +N^{\gamma}  \sum_{y \ge N} \big[ 1-\tfrac{1}{N} -\tfrac{z}{N} \big]\, p(y-z)\\
&+N^\gamma \sum_{\substack{y>z\\y\in \Lambda_N}} p(y-z) \big(\tfrac{y-z}{N}\big) - N^\gamma \sum_{\substack{y<z\\y\in \Lambda_N}} p(y-z)  \big(\tfrac{z-y}{N}\big)\\
&=- \tfrac{z}{N} N^\gamma r_N^- \big(\tfrac{z}{N}\big) + \big( 1-\tfrac{1}{N}-\tfrac{z}{N}\big) N^\gamma r_N^+ \big(\tfrac{z}{N} \big) + N^\gamma \sum_{y\in \Lambda_N} p(y-z) \big(\tfrac{y-z}{N}\big)\\
&=- \tfrac{z}{N} N^\gamma r_N^- \big(\tfrac{z}{N}\big) + \big( 1-\tfrac{1}{N}-\tfrac{z}{N}\big) N^\gamma r_N^+ \big(\tfrac{z}{N} \big) +N^{\gamma-1} {\tilde  r}_N^- \big(\tfrac{z}{N}\big)-N^{\gamma-1} {\tilde  r}_N^+ \big(\tfrac{z}{N}\big)
\end{split}
\end{equation*}
is a discrete approximation of the function $\varphi: (0,1) \to \RR$ given by
\begin{equation*}
\begin{split}
\varphi (q) &=\,  \cfrac{c_{\gamma}}{\gamma (1-\gamma)} \, \{ (1-q)^{1-\gamma} - q^{1-\gamma} \}\\
\end{split}
\end{equation*}
and 
\begin{equation*}
\begin{split}
\theta_N &= \cfrac{\alpha}{N-1} \sum_{z=1}^{N-1} \sum_{y \le 0} z \, p(z-y) - \cfrac{\beta}{N-1} \sum_{y=1}^{N-1} \sum_{z \ge N} (N-1-y)\,  p(z-y).
\end{split}
\end{equation*}

It is easy to compute the limit of $N^{\gamma -1}\theta_N$ by writing it as a Riemann sum:
\begin{equation*}
\begin{split}
\lim_{N \to \infty} N^{\gamma -1} \theta_N &=  \alpha c_\gamma \; \lim_{N \to \infty}  \cfrac{N}{N-1} \cfrac{1}{N^2} \sum_{z=1}^{N-1} \sum_{y\le 0} \cfrac{\tfrac{z}{N}}{\big| \tfrac{z}{N} -\tfrac{y}{N} \big|^{1+\gamma}} \\
&- \beta c_\gamma  \; \lim_{N \to \infty}  \cfrac{N}{N-1} \cfrac{1}{N^2} \sum_{y=1}^{N-1} \sum_{z\ge N} \cfrac{(1-\tfrac{1}{N} -\tfrac{y}{N})}{\big| \tfrac{z}{N} -\tfrac{y}{N} \big|^{1+\gamma}} \\
&= \alpha c_\gamma \; \int_0^1 \Big(\int_{-\infty}^0\tfrac{dy}{|z-y|^{1+\gamma}}\Big) \, z \, dz -\beta c_\gamma \; \int_0^1 \Big(\int_{1}^{+\infty} \tfrac{dz}{|z-y|^{1+\gamma}}\Big) \, (1-y) \, dy\\
&= \, \cfrac{c_\gamma (\alpha -\beta)}{\gamma (2-\gamma)}.
\end{split}
\end{equation*}

Let us now compute the limit of $\mu_N \Big( \langle \pi^N, \varphi_N \rangle \Big)= \tfrac{1}{N-1} \sum_{z= 1}^{N-1} \varphi_N (\tfrac{z}{N}) \langle \eta_z \rangle_N $. Observe that the function $\varphi$ is singular at $q=0$ and $q=1$ but that it is integrable on $[0,1]$. Lemma \ref{lem:app} and Remark \ref{rem:refcon} imply that for any $a \in (0,1)$, $\lim_{N \to \infty} |\varphi_N ([Nq]/N) - \varphi (q)| =0$ uniformly in $q \in [a,1-a]$. Therefore we fix some small $a\in (0,1)$ and we split the sum in three sums, one over $z < aN$, one over $aN \le  z \le (1-a)N$ and the last one over $z>(1-a)N$. By using the estimate (\ref{eq:misref}) for $r_N^-$ and similar ones for $r_N^+, {\tilde r}_N^{\pm}$ it is easy to get that 
$$\Big| \varphi_N \big(\tfrac{z}{N}\big) \Big| \le C \Big[ \big(\tfrac{z}{N}\big)^{1-\gamma} +  \big(1- \tfrac{z}{N}\big)^{1-\gamma}\Big]$$ 
so that (use $\langle \eta_z \rangle_N \le 1$)
$$\left|\cfrac{1}{N-1} \sum_{\substack{z<aN\\ z>(1-a)N} } \varphi_N (\tfrac{z}{N}) \langle \eta_z \rangle_N \right| \le C' [ a^{2-\gamma} + (1-a)^{2-\gamma}]$$
for some constants $C,C'>0$ independent of $N$. By using the uniform convergence of $\varphi_N$ to $\varphi$ over $[a,1-a]$ we get that
$$\lim_{N \to \infty} \cfrac{1}{N-1} \sum_{aN \le z \le (1-a)N} \, \varphi_N (\tfrac{z}{N}) \langle \eta_z \rangle_N = \int_{a}^{1-a} \varphi (q) {\bar \rho} (q) dq.$$
Thus sending first $N \to \infty$ and then $a \to 0$ we conclude that
\begin{equation*}
\lim_{N \to \infty} \mu_N \left(\, \langle \pi^N, \varphi_N \rangle \, \right)= \int_0^1 {\bar \rho} (q) {\varphi} (q) dq.
\end{equation*}

Then Theorem \ref{thm:Fick} follows by simple integral computations and using the fact that $\bar \rho$ is the stationary solution of the fractional diffusion equation with Dirichlet boundary conditions.\\

\section*{Acknowledgements}
The authors are very grateful to Patricia Gon\c{c}alves and Claudio Landim for useful discussions and suggestions and specially to Milton Jara for an illuminating discussion on the derivation of Proposition \ref{cor:boundary}. This work has been supported by the Brazilian-French Network in Mathematics and the project EDNHS ANR-14-CE25-0011 of the French National Research Agency (ANR) and the project LSD ANR-15-CE40-0020-01 LSD of the French National Research Agency (ANR).

\appendix

\section{Computations involving the generator}

\begin{lem}
\label{lem:compA}
For any $j \ne k \in \Lambda_N$, we have 
\begin{equation}
\begin{split}
L_N^0 (\eta_j \eta_k) &= \eta_j L_N^0 \eta_k + \eta_k L_N^0 \eta_j -  p(k-j) (\eta_k -\eta_j)^2,\\
L_N^r (\eta_j \eta_k) &= \eta_j L_N^r \eta_k + \eta_k L_N^r \eta_j,\\
L_N^\ell (\eta_j \eta_k) &= \eta_j L_N^\ell \eta_k + \eta_k L_N^\ell \eta_j. 
\end{split}
\end{equation}
\begin{proof}
By definition of $L_N^0$ we have that
\begin{equation*}
\begin{split}
L_N^0 (\eta_j \eta_k) &= \cfrac{1}{2}\sum _{x,y\in \Lambda_{N}}p(y-x)\left[\eta_{j}^{xy}\eta_{k}^{xy}-\eta_{j}\eta_{k}\right]\\
&=\cfrac{1}{2}\sum _{x,y\in \Lambda_{N}}p(y-x)\left[(\eta_{j}^{xy}\eta_{k}-\eta_{j}\eta_{k})+(\eta_{k}^{xy}\eta_{j}-\eta_{j}\eta_{k})+\right. \\
&\;\left. +\eta_{j}^{xy}\eta_{k}^{xy}-\eta_{j}^{xy}\eta_{k}-\eta_{k}^{xy}\eta_{j}+\eta_{j}\eta_{k}\right]\\
&=\eta_j L_N^0 \eta_k + \eta_k L_N^0 \eta_j + \cfrac{1}{2}\sum _{x,y\in \Lambda_{N}}p(y-x)\left[\eta_{j}^{xy}-\eta_{j}\right] \left[ \eta_{k}^{xy}-\eta_{k}\right] \\
&= \eta_j L_N^0 \eta_k + \eta_k L_N^0 \eta_j -  p(k-j) (\eta_k -\eta_j)^2.
\end{split}
\end{equation*}
In order to prove the second expression, note that $\left[\eta_{j}^{x}-\eta_{j}\right] \left[ \eta_{k}^{x}-\eta_{k}\right] = 0$, for all $x \in \bZ$, thus by definition of $L_N^r$ we have
\begin{equation*}
\begin{split}
L_N^r (\eta_j \eta_k)&=\sum_{x\in\Lambda_{N},y\ge N}p(y-x)\left[\eta_{x}(1-\beta)+(1-\eta_{x})\beta\right]\left[(\eta_{j}\eta_{k})^{x}-\eta_{j}\eta_{k}\right]\\
&= \eta_j L_N^r \eta_k + \eta_k L_N^r \eta_j +\\
&+\sum_{x\in\Lambda_{N},y\ge N}p(y-x)\left[\eta_{x}(1-\beta)+(1-\eta_{x})\beta\right]\left[\eta_{j}^{x}-\eta_{j}\right] \left[ \eta_{k}^{x}-\eta_{k}\right]\\
&=\eta_j L_N^r \eta_k + \eta_k L_N^r \eta_j.
\end{split}
\end{equation*}
The proof of the third expression is analogous.
\end{proof}
\end{lem}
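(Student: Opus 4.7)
The three identities are instances of the general ``carré du champ'' formula for a Markov generator $L$ acting on a product of functions, namely $L(fg) = f Lg + g Lf + \Gamma(f,g)$, where $\Gamma$ is the quadratic term coming from the second-order differences. The strategy is therefore to write down the carré du champ explicitly for $f=\eta_j$, $g=\eta_k$ and to observe that in the bulk case ($L_N^0$) the only swap which moves both $\eta_j$ and $\eta_k$ simultaneously is the transposition of sites $j$ and $k$, while in the reservoir cases ($L_N^r$, $L_N^\ell$) no single-site flip can move both at once since $j\neq k$.

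Concretely, for any exchange-type generator of the form $L f(\eta) = \tfrac12\sum_{x,y} p(y-x)[f(\eta^{xy})-f(\eta)]$, a direct algebraic expansion of the telescoping identity
\begin{equation*}
\eta_j^{xy}\eta_k^{xy}-\eta_j\eta_k = (\eta_j^{xy}-\eta_j)\eta_k + (\eta_k^{xy}-\eta_k)\eta_j + (\eta_j^{xy}-\eta_j)(\eta_k^{xy}-\eta_k)
\end{equation*}
gives $L_N^0(\eta_j\eta_k) = \eta_j L_N^0\eta_k + \eta_k L_N^0\eta_j + \Gamma_0(\eta_j,\eta_k)$. Next, I would notice that $\eta_j^{xy}-\eta_j$ vanishes unless $\{x,y\}\ni j$, and equals $\eta_y-\eta_j$ (resp.\ $\eta_x-\eta_j$) when $x=j$ (resp.\ $y=j$); hence, since $j\neq k$, the only pairs $(x,y)$ for which both differences $\eta_j^{xy}-\eta_j$ and $\eta_k^{xy}-\eta_k$ are nonzero are $(j,k)$ and $(k,j)$. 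Each of these pairs contributes $-(\eta_k-\eta_j)^2$, so after using the symmetry of $p$ and the prefactor $1/2$ I would obtain $\Gamma_0(\eta_j,\eta_k) = -p(k-j)(\eta_k-\eta_j)^2$, which is exactly the extra term claimed.

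For $L_N^r$ and $L_N^\ell$, the mechanism is simpler. These are Glauber-type generators of the form $Lf(\eta)=\sum_x c(x,\eta)[f(\eta^x)-f(\eta)]$, for which the carré du champ identity reads
\begin{equation*}
L(\eta_j\eta_k) = \eta_j L\eta_k + \eta_k L\eta_j + \sum_x c(x,\eta)(\eta_j^x-\eta_j)(\eta_k^x-\eta_k).
\end{equation*}
Because a single-site flip at $x$ only changes coordinate $x$, at most one of the factors $\eta_j^x-\eta_j$ and $\eta_k^x-\eta_k$ can be nonzero (here we crucially use $j\neq k$), so the quadratic term vanishes identically and the two reservoir identities follow.

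No real obstacle is expected; the whole argument is an algebraic expansion plus the observation that two-particle differences arise only from swaps involving both labeled sites, which in the bulk forces $\{x,y\}=\{j,k\}$ and in the reservoir case is impossible. The only point that deserves a moment of care is bookkeeping: the $1/2$ in the definition of $L_N^0$ and the symmetry $p(k-j)=p(j-k)$ combine correctly to produce a single factor $p(k-j)$ in front of $(\eta_k-\eta_j)^2$.
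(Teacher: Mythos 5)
Your proposal is correct and follows essentially the same route as the paper: the same telescoping expansion of $\eta_j^{xy}\eta_k^{xy}-\eta_j\eta_k$ producing the cross term, which for $L_N^0$ is supported only on the pairs $(j,k),(k,j)$ and yields $-p(k-j)(\eta_k-\eta_j)^2$, and which vanishes identically for the single-site flip generators $L_N^r,L_N^\ell$ since $j\neq k$. The bookkeeping with the factor $1/2$ and the symmetry of $p$ is handled exactly as in the paper.
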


\section{Proof of Lemma \ref{lem:app}}
\label{sec:app3}

Let us prove the first item, the second one being similar. It is sufficient to prove it for $q$ in the form $z/N$, $z \ge aN$. We have, by performing an integration by parts, that
\begin{equation*}
\begin{split}
 &N^\gamma r_{N}^{-}(\tfrac{z}{N}) - r^{-}(\tfrac{z}{N}) =N^\gamma \sum_{y \ge z} p(y) - c_{\gamma} \int_{z/N}^{\infty} q^{-\gamma -1} dq\\
&= c_{\gamma} \sum_{y \ge z}  \left[ \tfrac{1}{N}\big( \tfrac{y}{N} \big)^{-\gamma -1} - \int_{y/N}^{(y+1)/N}q^{-\gamma -1} \, dq \right]\\
 &= c_{\gamma} \sum_{y \ge z}  \, \int_{y/N}^{(y+1)/N} \left[ \big( \tfrac{y}{N} \big)^{-\gamma -1}- q^{-\gamma -1} \right]\, dq\\
 &= c_{\gamma} \sum_{y \ge z}  \, \int_{y/N}^{(y+1)/N} \cfrac{d}{dq} \left[ q- \big( \tfrac{y+1}{N} \big)\right] \,\left[ \big( \tfrac{y}{N} \big)^{-\gamma -1}- q^{-\gamma -1} \right]\, dq\\
&=-(\gamma+1) c_\gamma \sum_{y \ge z} \int_{y/N}^{(y+1)/N} q^{-(\gamma +2)} \big(q-\tfrac{y+1}{N} \big) \, dq.
\end{split}
\end{equation*}
Therefore we have that
 \begin{equation}
 \label{eq:misref}
 \Big| N^\gamma r_{N}^{-}(\tfrac{z}{N}) - r^{-}(\tfrac{z}{N})\Big| \le  c_\gamma N^{-1} (z/N)^{-\gamma -1}
 \end{equation}
 which is of order ${\mc O} (N^{-1})$ since $z/N \ge a$. 

For the last claim it is sufficient to prove it for $q=x/N$. By using the symmetry of $p$ we can rewrite
$$({\mc K}_N H)(\tfrac{x}{N})= \cfrac{1}{2} \sum_{z \in \ZZ} p(z) \left[ H(\tfrac{x+z}{N}) +H(\tfrac{x-z}{N}) -2 H(\tfrac{x}{N})\right].$$
We split the sum over $z \in \ZZ$ into a sum over $z \ge 1$ and over $z\le -1$ (recall that $p(0)=0$) and we treat separately the convergence of these two sums. Since the study is the same we consider only the sum over $z\ge 1$. Then, by a discrete integration by parts, we have
\begin{equation*}
\begin{split}
&N^{\gamma} \sum_{z \ge 1} p(z) \left[ H(\tfrac{x+z}{N}) +H(\tfrac{x-z}{N}) -2 H(\tfrac{x}{N})\right]\\
&=\sum_{z=2}^{\infty} N^{\gamma} r_N^- (\tfrac{z}{N}) \left\{ \theta_{\tfrac{x}{N}} (\tfrac{z}{N}) - \theta_{\tfrac{x}{N}} (\tfrac{z-1}{N})\right\} + N^{\gamma} r_N^{-} (\tfrac{1}{N}) \; \theta_{\tfrac{x}{N}} (\tfrac{1}{N})
\end{split}
\end{equation*}
where 
$$\theta_{u} (v)= H(u+v) +H(u-v) -2 H(u).$$
By a second order Taylor expansion of $H$, which is uniform over $x$ since $H$ has compact support, we see that since $\gamma<2$,
$$\lim_{N \to \infty} N^{\gamma} r_N^{-} (\tfrac{1}{N}) \; \theta_{\tfrac{x}{N}} (\tfrac{1}{N})=0$$
uniformly over $x$.
Our aim is now to replace in the remaining sum the term $N^{\gamma} r_N^- (\tfrac{z}{N})$ by $r^{-} (\tfrac{z}{N})$. Recall that we have seen in the proof of the first item that for any $a \in (0,1)$ there exists a constant $C_a>0$ such that
$$| N^{\gamma} r_N^- (\tfrac{z}{N}) - r^{-} (\tfrac{z}{N}) | \le C_a N^{-1}.$$
We rewrite the sum 
$$\sum_{z=2}^{\infty} \left\{ N^{\gamma} r_N^- (\tfrac{z}{N}) -r^{-} (\tfrac{z}{N}) \right\} \;  \left\{ \theta_{\tfrac{x}{N}} (\tfrac{z}{N}) - \theta_{\tfrac{x}{N}} (\tfrac{z-1}{N})\right\} $$
as the sum over $2 \le z \le aN$ and the sum over $z>aN$. In fact the sum over $z>aN$ is equal to the sum over $3N>z>aN$ since for $z\ge 3N$, $ \theta_{\tfrac{x}{N}} (\tfrac{z}{N}) - \theta_{\tfrac{x}{N}} (\tfrac{z-1}{N})=0$. Moreover, we have that $|\theta_{\tfrac{x}{N}} (\tfrac{z}{N}) - \theta_{\tfrac{x}{N}} (\tfrac{z-1}{N})| ={\mc O} (N^{-1})$ uniformly in $x$ and $z$. The sum over $3N>z>aN$ is thus bounded from above by $C^{\prime}_a /N$ for some positive constant $C^{\prime}_a$ (going to $\infty$ as $a$ goes to $0$). Since $\theta_u (v) \le C v^2$ for some positive constant uniformly in $u$, by using the estimate (\ref{eq:misref}) obtained in the proof of the first item, we have also that
\begin{equation*}
\begin{split}
&\left| \sum_{z=2}^{[aN]} \left\{ N^{\gamma} r_N^- (\tfrac{z}{N}) -r^{-} (\tfrac{z}{N}) \right\} \;  \left\{ \theta_{\tfrac{x}{N}} (\tfrac{z}{N}) - \theta_{\tfrac{x}{N}} (\tfrac{z-1}{N})\right\} \right| \\
&\le C^{\prime}  \sum_{z=2}^{[aN]} (\tfrac{z}{N})^2 N^{-1} (z/N)^{-\gamma -1} \le C'' a^{2-\gamma}
\end{split}
\end{equation*}
for constants $C', C''$ which do not depend on $a$ and $x$. In conclusion, the replacement of the term $N^{\gamma} r_N^- (\tfrac{z}{N})$ by $r^{-} (\tfrac{z}{N})$ costs $C'' a^{2-\gamma} + C^{\prime}_a /N$. Therefore, by sending $N \to \infty$ and then $a\to0$, we are reduced to estimate 
\begin{equation*}
\begin{split}
& \sum_{z=2}^{\infty} r^- (\tfrac{z}{N}) \left\{ \theta_{\tfrac{x}{N}} (\tfrac{z}{N}) - \theta_{\tfrac{x}{N}} (\tfrac{z-1}{N})\right\}= \cfrac{1}{N}  \sum_{z=2}^{\infty} r^- (\tfrac{z}{N}) \theta^{\prime}_{\tfrac{x}{N}} (\tfrac{z}{N}) \; + \; \ve_N (x).
\end{split}
\end{equation*}
By a second Taylor expansion, and using that $\gamma<2$, it is easy to see that
$$\lim_{N \to \infty} \sup_{x \in \Lambda_N} |\ve_N (x)|=0.$$
To conclude we observe that there exists $C>0$ such that $|r^{-} (q) \theta^{\prime}_{u} (q) - r^{-} (q') \theta^\prime_{u}(q')| \le C|q-q'| (q\wedge q')^{-\gamma}$, uniformly in $u$. This is because $\theta^{\prime}_u (0) =0$. It follows that for some positive constant $C>0$, we have
\begin{equation*}
\begin{split}
&\left| \cfrac{1}{N}  \sum_{z=2}^{\infty} r^- (\tfrac{z}{N}) \theta^{\prime}_{\tfrac{x}{N}} (\tfrac{z}{N}) - \int_{2/N}^{\infty} r^{-} (q) \theta^{\prime}_{\tfrac{x}{N}} (q) dq \right|\\
& = \left| \sum_{z=2}^{\infty}  \int_{\tfrac{z}{N}}^{\tfrac{z+1}{N}} ( r^- (\tfrac{z}{N}) \theta^{\prime}_{\tfrac{x}{N}} (\tfrac{z}{N}) - r^{-} (q) \theta^{\prime}_{\tfrac{x}{N}} (q))  dq \right|\\
&\le C N^{\gamma -2} \sum_{z=2}^{\infty} z^{-\gamma}
\end{split}
\end{equation*}
where the last term goes to $0$ as $N$ goes to $\infty$.

\section{Proof of Lemma \ref{lem:ghj}}
\label{app:B}
The fact that $\PP^*$ is concentrated  on absolutely continuous measures is obvious since for any continuous function $H:[0,1] \to \RR$ we have
$$|\langle \pi^N , H \rangle| \le \cfrac{1}{(N-1)} \sum_{x=1}^{N-1} | H (x/N)  |$$
and similarly for ${\hat \pi}^N$.
Since for any continuous function $H$, the functional $\pi \in {\mc M}_d^+ \to \langle \pi , H \rangle$ is continuous, by weak convergence, we have that $\PP^*$ is concentrated on measures $(\pi, {\hat \pi})$ such that for any continuous function $H, {\hat H}$
$$|\langle \pi , H \rangle| \le \int_{[0,1]} |H(u)| du, \quad |\langle {\hat \pi} ,\hat H \rangle| \le \int_{[0,1]^2} |{\hat H} (u,v)| du dv$$
which implies that such a $\pi$ and ${\hat \pi}$ are absolutely continuous with respect to the Lebesgue measure. The densities are denoted by $\pi$ and $\hat \pi$. Since ${\hat \pi}^N$ is a product measure whose marginals are given by $\pi^N$, by weak convergence, we have that ${\hat \pi} (u, v) =\pi(u)\pi (v)$ for any $(u,v) \in [0,1]^2$.  

To prove that $\pi$ is continuous we adapt the proof of \cite{KLO} Proposition A.1.1. Let $\nu^{N}_{\rho(\cdot)}$ be the Bernoulli product measure on $\Omega_{N}$ with marginals given by
\begin{equation}
\label{eq:nurho}
\nu_{\rho(\cdot)}^{N}\lbrace \eta_{x} = 1 \rbrace = \rho\left( \dfrac{x}{N}\right),
\end{equation}
where $\rho : [0, 1] \rightarrow [0, 1]$ is a smooth function such that  $\alpha \leq \rho(q)\leq\beta$, for all $q\in [0,1]$, and  $\rho(0) = \alpha$ and $\rho(1)=\beta$. 
 
Let $\ve>0$ be a small real number. Let $F \in C_c^{\infty} ([0,1]^2)$ be a smooth test function and denote by $(\eta (t))_{t \ge 0}$ the boundary driven symmetric long-range exclusion process with generator $N^{\gamma} L_N$. By stationarity of $\mu_N$ and the entropy inequality we have
\begin{equation*}
\begin{split}
&\mu_N \left(  N^{\gamma -1} \sum_{\substack{x,y \in \Lambda_N\\ |x-y| \ge \ve N}} F( \tfrac{x}{N}, \tfrac{y}{N} ) p(y-x) (\eta_y -\eta_x)\right)\\
&={\mathbb E}_{\mu_N} \left( \int_0^1 dt \;  N^{\gamma -1} \sum_{\substack{x,y \in \Lambda_N\\ |x-y| \ge \ve N}}  F( \tfrac{x}{N}, \tfrac{y}{N} ) p(y-x) (\eta_y(t) -\eta_x(t))\right)\\
&\le C_0 + \cfrac{1}{N} \log \left\{ {\mathbb E}_{\nu^{N}_{\rho(\cdot)}} \left( \exp \left[ {N^{\gamma} \int_0^1 dt \, \sum_{\substack{x,y \in \Lambda_N\\ |x-y| \ge \ve N}}  F( \tfrac{x}{N}, \tfrac{y}{N} ) p(y-x) (\eta_y (t) -\eta_x (t) )} \right]\right) \right\}
\end{split}
\end{equation*}
where $C_0$ is a constant resulting from the bound {\footnote{The fact that the relative entropy of $\mu_N$ with respect to $\nu^{N}_{\rho(\cdot)}$ is bounded above by $C_0 N$ with $C_0<\infty$ independent of $N$ can be proved easily since $\{0,1\}$ is compact.}} of the relative entropy of $\mu_N$ with respect to $\nu^{N}_{\rho(\cdot)}$. 

By Feynman-Kac's formula the last expression is bounded by
$$\cfrac{\lambda_N}{N} +C_0$$
where the eigenvalue $\lambda_N$ is given by the variational formula 
\begin{equation}
\label{eq:varfor}
\begin{split}
\lambda_N = \sup_{f} &\left\{ N^{\gamma}  \sum_{\substack{x,y \in \Lambda_N\\ |x-y| \ge \ve N}}  F( \tfrac{x}{N}, \tfrac{y}{N} ) p(y-x) \langle (\eta_y -\eta_x) f(\eta) \rangle_{\nu^{N}_{\rho(\cdot)}} \right.\\
& \left.+  N^{\gamma} \left\langle  L_N   {\sqrt f} , {\sqrt f} \right\rangle_{\nu^{N}_{\rho(\cdot)}}  \right\}
\end{split}
\end{equation}
and the supremum is taken over all the densities $f$ on $\Omega_N$ with respect to $\nu^N_{\rho(\cdot)}$.
Let $F^a$ be the antisymmetric (resp. symmetric) part of $F$, i.e.
$$\forall (u,v) \in [0,1]^2, \quad F^a (u,v) =\cfrac{1}{2} \Big[ F(u,v) -F(v,u) \Big], \quad F^s (u,v) =\cfrac{1}{2} \Big[ F(u,v) + F(v,u) \Big] .$$
Observe that $F^{a} (u,u)=0$ and that $F=F^a +F^s$. We can rewrite
\begin{equation}
\label{eq:pat67}
\begin{split}
& \sum_{\substack{x,y \in \Lambda_N\\ |x-y| \ge \ve N}}  F( \tfrac{x}{N}, \tfrac{y}{N} ) p(y-x) \langle (\eta_y -\eta_x) f(\eta) \rangle_{\nu^{N}_{\rho(\cdot)}} \\
&=  \sum_{\substack{x,y \in \Lambda_N\\ |x-y| \ge \ve N}}  F^a( \tfrac{x}{N}, \tfrac{y}{N} ) p(y-x) \langle (\eta_y -\eta_x) f(\eta) \rangle_{\nu^{N}_{\rho(\cdot)}}\\
\end{split}
\end{equation}
as 
\begin{eqnarray*}
&&\sum_{\substack{x,y \in \Lambda_N\\ |x-y| \ge \ve N}}  F^a( \tfrac{x}{N},\tfrac{y}{N} )  p(y-x) \langle \eta_y \left( f(\eta) - f(\eta^{xy})\right) \rangle_{\nu^{N}_{\rho(\cdot)}}\\
&+&\sum_{\substack{x,y \in \Lambda_N\\ |x-y| \ge \ve N}} F^a( \tfrac{x}{N}, \tfrac{y}{N} ) p(y-x) \langle \eta_y f(\eta^{xy})\left( 1 - \theta^{xy}(\eta)\right)\rangle_{\nu^{N}_{\rho(\cdot)}}\\
&=&\sum_{\substack{x,y \in \Lambda_N\\ |x-y| \ge \ve N}}  F^a( \tfrac{x}{N},\tfrac{y}{N} )  p(y-x) \langle \eta_y \left( f(\eta) - f(\eta^{xy})\right) \rangle_{\nu^{N}_{\rho(\cdot)}}\\
&+&\sum_{\substack{x,y \in \Lambda_N\\ |x-y| \ge \ve N}} F^a( \tfrac{x}{N}, \tfrac{y}{N} ) p(y-x) \langle \eta_x f(\eta)\left(\theta^{xy}(\eta) -1\right)\rangle_{\nu^{N}_{\rho(\cdot)}}\\
&=& (I) + (II)
\end{eqnarray*}
where $\theta^{xy}(\eta)=\tfrac{d\nu_{\rho(\cdot)}^{N}(\eta^{xy})}{d\nu_{\rho(\cdot)}^{N}(\eta)}.$
By  Cauchy-Schwarz inequality, the fact that $f$ is a density and $|\eta_y| \le 1$, we have that $(I)$ is bounded above by
\begin{equation*}
\begin{split}
  \sum_{\substack{x,y \in \Lambda_N\\ |x-y| \ge \ve N}}  \left| F^a ( \tfrac{x}{N},\tfrac{y}{N} ) \right|  p(y-x) \sqrt{ \left\langle [ \sqrt {f  (\eta^{xy})} -\sqrt {f (\eta)}]^2 \right\rangle_{\nu^{N}_{\rho(\cdot)}}}.
 \end{split}
\end{equation*}
Since $\rho(\cdot)$ is Lipshitz we have that $\sup_{\eta \in \Omega_N} \, \left|\theta^{xy}(\eta) -1\right| ={\mc O} ( \tfrac{|x -y|}{N})$. Therefore, by using the elementary inequality $|ab| \le \tfrac{a^2}{2C}+\tfrac{Cb^2}{2}$, and the fact that $f$ is a density, we have that $(II)$ is bounded above by a constant (independent of $N,\ve,F$) times
\begin{equation*}
\begin{split}
& \sum_{\substack{x,y \in \Lambda_N\\ |x-y| \ge \ve N}}   p(y-x) \Big[ F^a \Big( \tfrac{x}{N}, \tfrac{y}{N} \Big)\Big]^2  \; + \;   \sum_{\substack{x,y \in \Lambda_N\\ |x-y| \ge \ve N}}  p(y-x) \left(\cfrac{|x-y|}{N}\right)^2\\
&= c_\gamma N^{1-\gamma} \left\{  \cfrac{1}{N^2}  \sum_{\substack{x,y \in \Lambda_N\\ |x-y| \ge \ve N}}  \cfrac{\Big[ F^a \Big( \tfrac{x}{N}, \tfrac{y}{N} \Big)\Big]^2}{ | \tfrac{x}{N} -\tfrac{y}{N}|^{1+\gamma}}   \; + \;       \cfrac{1}{N^2}  \sum_{\substack{x,y \in \Lambda_N\\ |x-y| \ge \ve N}} | \tfrac{x}{N} -\tfrac{y}{N}|^{1-\gamma}  \right\}.
 \end{split}
\end{equation*}
Observe that
$$\sup_{\ve >0} \sup_{N \ge 1} \cfrac{1}{N^2}  \sum_{\substack{x,y \in \Lambda_N\\ |x-y| \ge \ve N}} | \tfrac{x}{N} -\tfrac{y}{N}|^{1-\gamma} <\infty$$
since $1-\gamma>-1$.

By using (\ref{eq:varfor}), Lemma \ref{lemmabulk}, Cauchy-Schwarz inequality and the previous upper bound for (\ref{eq:pat67}) it follows that there exist constants $C',C'',C''',K$ (independent of $\ve>0$, $N\ge 1$ and $F\in C^\infty_c ([0,1]^2)$) such that
\begin{equation*}
\begin{split}
\cfrac{\lambda_N}{N} & \le N^{\gamma -1} \sup_f \left[ \sum_{\substack{x,y \in \Lambda_N\\ |x-y| \ge \ve N}}  p(y-x) \left(\left| F^a ( \tfrac{x}{N}, \tfrac{y}{N} ) \right|\sqrt{ \left\langle [ \sqrt {f (\eta^{xy})} -\sqrt {f (\eta)}]^2 \right\rangle_{\nu^{N}_{\rho(\cdot)}}} \right.\right.\\
& \left.\left. - C^{\prime} \left\langle [ \sqrt {f (\eta^{xy})} -\sqrt {f (\eta)}]^2 \right\rangle_{\nu^{N}_{\rho(\cdot)}} \right)  \right] \, +  \cfrac{C^{''}}{N^2}  \sum_{\substack{x,y \in \Lambda_N\\ |x-y| \ge \ve N}}  \cfrac{\Big[ F^a \Big( \tfrac{x}{N}, \tfrac{y}{N} \Big)\Big]^2}{   | \tfrac{x}{N} -\tfrac{y}{N}|^{1+\gamma} } \, +K\\
&\le C^{'''} \; \cfrac{1}{N^2} \sum_{x\ne y \in \Lambda_N}  \cfrac{c_\gamma}{| \tfrac{x}{N} -\tfrac{y}{N}|^{1+\gamma}}  \left[ F^a( \tfrac{x}{N},\tfrac{y}{N} ) \right]^2 +K.
\end{split}
\end{equation*}
We have proved that
\begin{equation}
\begin{split}
&\mu^N \left(  N^{\gamma -1} \sum_{\substack{x,y \in \Lambda_N\\ |x-y| \ge \ve N}} F( \tfrac{x}{N}, \tfrac{y}{N} ) p(y-x) (\eta_y -\eta_x)\right)= -2c_\gamma \; \mu^N \left(  \langle \pi^N, g_N \rangle  \right)\\
&\le \cfrac{C^{'''} }{N^2} \sum_{\substack{x,y \in \Lambda_N\\ |x-y| \ge \ve N}}  \cfrac{c_\gamma}{| \tfrac{x}{N} -\tfrac{y}{N}|^{1+\gamma}}  \left[ F^a ( \tfrac{x}{N}, \tfrac{y}{N} ) \right]^2 + K.
\end{split}
\end{equation}
Here $g_N$ is the function defined by
$$\forall u \in [0,1], \quad g_N (u) = \cfrac{1}{N} \sum_{\substack{y \in \Lambda_N\\ \big|\tfrac{y}{N}- u \big| \ge \ve }} \, \cfrac{F^a \big(u, \tfrac{y}{N}\big)}{\vert u -\tfrac{y}{N}\vert^{1+\gamma}}$$
and is a discretization of the smooth function  $g$ defined by
$$ \forall u \in [0,1], \quad g(u) = \int_{\substack{y\in [0,1], \\ |y-u| \ge \ve}} \cfrac{F^a (u,y)}{|y-u|^{1+\gamma}} \, dy.$$
Let $Q_{\ve}=\{(u,v)\in [0,1]^2\; ; \; |u-v| \ge \ve\}$.Observe first that for symmetry reasons we have that for any integrable function $\pi$, 
$$\int_0^1 \pi (u) g(u) du = \cfrac{1}{2} \iint_{Q_\ve}\cfrac{(\pi (v) -\pi (u)) F^{a} (u,v) }{|u-v|^{1+\gamma}}\; du dv.$$
We take the limit $N \to \infty$. We conclude that there exist constants $C,C'>0$ independent of $F \in C_c^{\infty} ([0,1]^2)$ and $\ve>0$ such that
\begin{equation}
{\bb E}^* \left[  \iint_{Q_\ve}\cfrac{(\pi (v) -\pi (u)) F^{a} (u,v) }{|u-v|^{1+\gamma}}\; du dv \; -\;   C \iint_{Q_{\ve}}\cfrac{ \big[ F^{a} (u,v) \big]^2 }{|u-v|^{1+\gamma}} \; du dv  \right] \le C'.
\end{equation}
It is easy to see that the supremum over $F$ can be inserted in the expectation (see Lemma 7.5 in \cite{KLO2}) so that
\begin{equation}
{\bb E}^* \left[ \sup_F \left\{   \iint_{Q_{\ve} }\cfrac{(\pi (v) -\pi (u)) F^{a} (u,v) }{|u-v|^{1+\gamma}}\; du dv \; -\;   C \iint_{Q_{\ve}}\cfrac{ \big[ F^{a} (u,v) \big]^2 }{|u-v|^{1+\gamma}} \; du dv     \right\} \right] \le C'.
\end{equation}
By writing $F=F^a + F^s$, and observing that the function $(u,v) \in [0,1]^2 \to \pi(v) -\pi (u)$ is antisymmetric,  we have that 
$$ \iint_{Q_{\ve} }\cfrac{(\pi (v) -\pi (u)) F^{a} (u,v) }{|u-v|^{1+\gamma}}\; du dv \, =\,  \iint_{Q_{\ve} }\cfrac{(\pi (v) -\pi (u)) F (u,v) }{|u-v|^{1+\gamma}}\; du dv.$$
Moreover, by using the definition of $F^a$ and using the inequality $(\tfrac{a+b}{2})^2 \le \tfrac{a^2 + b^2}{2}$, it is easy to see that
$$\iint_{Q_{\ve}}\cfrac{ \big[ F^a (u,v) \big]^2 }{|u-v|^{1+\gamma}} \; du dv \, \le \, \iint_{Q_{\ve}}\cfrac{ \big[ F (u,v) \big]^2 }{|u-v|^{1+\gamma}} \; du dv.$$
It follows that
\begin{equation}
\label{eq:A3}
{\bb E}^* \left[ \sup_F \left\{   \iint_{Q_{\ve} }\cfrac{(\pi (v) -\pi (u)) F (u,v) }{|u-v|^{1+\gamma}}\; du dv \; -\;   C \iint_{Q_{\ve}}\cfrac{ \big[ F (u,v) \big]^2 }{|u-v|^{1+\gamma}} \; du dv     \right\} \right] \le C'.
\end{equation}

Consider the Hilbert space ${\mathbb L}^2 ([0,1]^2, d\mu_{\ve})$ where $\mu_{\ve}$ is the measure whose density with respect to Lebesgue measure is
$$ (u,v) \in [0,1]^2 \to {\bf 1}_{|u-v| \ge \ve} \, |u-v|^{-(1+\gamma)}.$$
By letting 
$$\Pi: (u,v) \in [0,1]^2 \to \pi(v) -\pi (u)$$ 
the previous formula implies that 
$$\EE^* \left[ \iint_{[0,1]^2} \Pi^2 (u,v) \, d\mu_{\ve} (u,v) \right] \le 4 CC'.$$
Letting $\ve \to 0$, by the monotone convergence theorem, we conclude that
\begin{equation*}
 \iint_{[0,1]^2}\cfrac{(\pi (v) -\pi (u))^2}{|u-v|^{1+\gamma}}\; du dv
\end{equation*}
is finite $\PP^*$ a.s.. It follows from Theorem 8.2 of  \cite{dPV} that $\PP^*$ almost surely $\pi$ is $\tfrac{\gamma-1}{2}$-H\"older. This concludes the proof of Lemma \ref{lem:ghj}.

\begin{lem}\label{lemmabulk}
Let $f$ be a density with respect to the product measure $\nu_{\rho(\cdot)}^{N}$ defined by (\ref{eq:nurho}). Then, there exist constants $C_{\alpha,\beta}, C_{\alpha,\beta}^{\prime}$ such that 
$$\langle L_{N}\sqrt{f},\sqrt{f} \rangle_{\nu^{N}_{\rho(\cdot)}}  \le  -C_{\alpha,\beta}^{\prime} D_{N} (f) + \dfrac{C_{\alpha,\beta}}{N^{\gamma -1}}\le -C_{\alpha,\beta}^{\prime} D^0_{N} (f) + \dfrac{C_{\alpha,\beta}}{N^{\gamma -1}}$$
where $D_N (f) =D_N^0 (f) +D_N^{\ell} (f)+D_N^r (f)$ with
\begin{equation*}
\begin{split}
&{ D}^0_{N} (f) = \sum_{x,y \in \Lambda_N} p(y-x) \left\langle [ \sqrt {f (\eta^{xy})} -\sqrt {f (\eta)}]^2 \right\rangle_{\nu_{\rho(\cdot)}^{N}},\\
&{ D}^\ell_{N} (f) = \sum_{x \in \Lambda_N, y \le 0} p(y-x) [\eta_x (1-\alpha) +\alpha (1-\eta_x)] \, \left\langle [ \sqrt {f (\eta^{x})} -\sqrt {f (\eta)}]^2 \right\rangle_{\nu_{\rho(\cdot)}^{N}},\\
&{ D}^r_{N} (f) = \sum_{x \in \Lambda_N, y \ge N} p(y-x) [\eta_x (1-\beta) +\beta (1-\eta_x)] \, \left\langle [ \sqrt {f (\eta^{x})} -\sqrt {f (\eta)]}^2 \right\rangle_{\nu_{\rho(\cdot)}^{N}}.
\end{split}
\end{equation*}
\end{lem}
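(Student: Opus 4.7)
My plan is to bound each of the three summands $\langle L_N^0\sqrt{f},\sqrt{f}\rangle_{\nu^N_{\rho(\cdot)}}$, $\langle L_N^\ell\sqrt{f},\sqrt{f}\rangle_{\nu^N_{\rho(\cdot)}}$ and $\langle L_N^r\sqrt{f},\sqrt{f}\rangle_{\nu^N_{\rho(\cdot)}}$ separately, each by $-C' D_N^{*}(f)+CN^{1-\gamma}$; summing yields the first inequality, and the second is immediate from $D_N^\ell(f),D_N^r(f)\ge 0$. The guiding principle is that $\nu^N_{\rho(\cdot)}$ is \emph{nearly} reversible for each elementary move: exactly reversible for $L_N^0$ if $\rho$ were constant, and for $L_N^\ell$ (resp.\ $L_N^r$) if $\rho\equiv\alpha$ (resp.\ $\rho\equiv\beta$). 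The Lipschitz continuity of $\rho$, together with $\rho(0)=\alpha$ and $\rho(1)=\beta$, quantifies the defects.

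Write $\phi=\sqrt{f}$ and $\theta^{xy}(\eta)=\nu^N_{\rho(\cdot)}(\eta^{xy})/\nu^N_{\rho(\cdot)}(\eta)$. Decomposing $\nu^N_{\rho(\cdot)}$ into its symmetric and antisymmetric components under $\eta\leftrightarrow\eta^{xy}$ (with relative densities $(1+\theta^{xy})/2$ and $(1-\theta^{xy})/2$) and using $\phi\,[\phi(\eta^{xy})-\phi]=-\tfrac12[\phi(\eta^{xy})-\phi]^2+\tfrac12[\phi(\eta^{xy})^2-\phi^2]$ gives the identity
\[
\langle L_N^0\phi,\phi\rangle = -\tfrac18\sum_{x,y\in\Lambda_N}p(y-x)\langle(1+\theta^{xy})[\phi(\eta^{xy})-\phi]^2\rangle + \tfrac18\sum_{x,y\in\Lambda_N}p(y-x)\langle(1-\theta^{xy})[\phi(\eta^{xy})^2-\phi^2]\rangle.
\]
Since $\rho\in C^1$ takes values in $[\alpha,\beta]\subset(0,1)$, one has the uniform bound $|\theta^{xy}(\eta)-1|\le C|y-x|/N$. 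Writing $1+\theta^{xy}=2+(\theta^{xy}-1)$ in the first sum isolates $-\tfrac14 D_N^0(f)$ plus an error; splitting into $|y-x|\le \delta N$ (absorbed into $D_N^0$ by choosing $\delta$ small) and $|y-x|>\delta N$ (controlled using $\sum_{|z|>\delta N}p(z)=O(\delta^{-\gamma}N^{1-\gamma})$) bounds the error by $C\delta D_N^0(f)+C\delta^{-\gamma}N^{1-\gamma}$. For the second sum, factor $\phi(\eta^{xy})^2-\phi^2=[\phi(\eta^{xy})-\phi][\phi(\eta^{xy})+\phi]$ and apply Cauchy--Schwarz; the decisive moment estimate is
\[
\sum_{x,y\in\Lambda_N} p(y-x)(1-\theta^{xy})^2\langle [\phi(\eta^{xy})+\phi]^2\rangle \le \frac{C}{N^2}\sum_{|z|\le N-2}|z|^2 p(z) = O(N^{1-\gamma}),
\]
where the last estimate uses $\sum_{|z|\le N}|z|^{1-\gamma}=O(N^{2-\gamma})$, finite precisely because $\gamma<2$. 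Young's inequality closes the bound: $\langle L_N^0\phi,\phi\rangle \le -C'_0 D_N^0(f)+CN^{1-\gamma}$.

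The boundary contributions $\langle L_N^\ell\phi,\phi\rangle$ and $\langle L_N^r\phi,\phi\rangle$ are handled analogously, replacing the swap $\eta\mapsto\eta^{xy}$ by the flip $\eta\mapsto\eta^x$ and the rate $p(y-x)$ by $r_N^-(x/N)\,c_x^\alpha(\eta)$, with $c_x^\alpha(\eta)=\eta_x(1-\alpha)+(1-\eta_x)\alpha$. The key observation is that $\nu^N_{\rho(\cdot)}(\eta)\,c_x^\alpha(\eta)$ is invariant under $\eta\leftrightarrow\eta^x$ exactly when $\rho(x/N)=\alpha$, so the relevant defect is $|\rho(x/N)-\alpha|\le C\,x/N$ by $\rho(0)=\alpha$ and Lipschitz continuity. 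The same symmetric/antisymmetric decomposition produces $-\tfrac14 D_N^\ell(f)$ plus errors bounded by the analogous moment
\[
\sum_{x\in\Lambda_N} r_N^-(x/N)\,(x/N)^2 \le \frac{C}{N^2}\sum_{x=1}^{N-1} x^{2-\gamma} = O(N^{1-\gamma}),
\]
again finite because $\gamma<2$. The treatment of $L_N^r$ is symmetric, using $\rho(1)=\beta$ and $r_N^+$.

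The main technical hurdle is that the naive estimate $|\sum p(y-x)\langle\phi^2(\theta^{xy}-1)\rangle|\le \|\theta^{xy}-1\|_\infty\cdot\sum_{x,y\in\Lambda_N} p(y-x) = O(1)$ is too coarse. One must instead \emph{square} the defect, paying a $\sqrt{D_N^0(f)}$ via Cauchy--Schwarz and gaining the integrable factor $(y-x)^2/N^2$; the restriction $\gamma\in(1,2)$ is exactly what makes the resulting second-moment sums scale like $N^{3-\gamma}$, producing the claimed $O(N^{1-\gamma})$ error after dividing by $N^2$. Keeping this bookkeeping consistent across the three pieces, and verifying the integrability thresholds in the moment sums, is the delicate part of the argument.
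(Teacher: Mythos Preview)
Your proposal is correct and follows essentially the same route as the paper: treat $L_N^0,L_N^\ell,L_N^r$ separately, use that $\nu^N_{\rho(\cdot)}$ is nearly reversible for each elementary move with defect controlled by $|\theta^{xy}-1|\le C|x-y|/N$ (resp.\ $|\rho(x/N)-\alpha|\le Cx/N$), factor $f(\eta^{xy})-f(\eta)$ and apply Young's inequality, and close with the second--moment sums $\sum_{|z|\le N}|z|^2 p(z)=O(N^{2-\gamma})$ and $\sum_x r_N^-(x/N)(x/N)^2=O(N^{1-\gamma})$. The only cosmetic difference is your symmetric/antisymmetric decomposition of $\nu^N_{\rho(\cdot)}$ and the $\delta$--splitting of the term $-\tfrac18\sum p\,\langle(1+\theta^{xy})[\phi(\eta^{xy})-\phi]^2\rangle$; the paper instead reaches $-\tfrac14 D_N^0(f)$ directly and handles only the cross term, but the substance is identical (indeed you could skip the splitting by using $1+\theta^{xy}\ge 1$).
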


\begin{proof}
In the proof $C$ and $C^\prime$ are constants depending on $\alpha$ and $\beta$ and $\rho(\cdot)$ whose value can change from line to line. We are going to show that
\begin{equation}
\label{eq:byron}
\begin{split}
&\langle L_{N}^{0}\sqrt{f},\sqrt{f} \rangle_{\nu^{N}_{\rho(\cdot)}}  \le  - C^{\prime} D_{N}^{0}(f) + \dfrac{C}{N^{\gamma -1}},\\
&\langle L_{N}^{\ell}\sqrt{f},\sqrt{f} \rangle_{\nu^{N}_{\rho(\cdot)}}  \le  - C^{\prime} D_{N}^{\ell}(f) + \dfrac{C}{N^{\gamma -1}},\\
&\langle L_{N}^{r}\sqrt{f},\sqrt{f} \rangle_{\nu^{N}_{\rho(\cdot)}}  \le  - C^{\prime} D_{N}^{r}(f) + \dfrac{C}{N^{\gamma -1}}.
\end{split}
\end{equation}
We have that 
$$\displaystyle \langle L_{N}^{0}\sqrt{f},\sqrt{f} \rangle_{\nu_{\rho(\cdot)}^{N}} =\dfrac{1}{2} \sum _{x,y \in \Lambda_{N}} p(x-y)\langle L_{x,y}^{0}\sqrt{f},\sqrt{f} \rangle_{\nu_{\rho(\cdot)}^{N}}$$
where $\displaystyle\langle L_{x,y}^{0}\sqrt{f},\sqrt{f} \rangle_{\nu_{\rho(\cdot)}^{N}} = \int p(x-y) \left[ \sqrt{f(\eta^{xy})}-\sqrt{f(\eta)}\right]\sqrt{f(\eta)}d\nu_{\rho(\cdot)}^{N}(\eta)$. Thus, recalling that $\theta^{xy}(\eta)=\tfrac{d\nu_{\rho(\cdot)}^{N}(\eta^{xy})}{d\nu_{\rho(\cdot)}^{N}(\eta)}$ we obtain the following
\begin{eqnarray*}
\displaystyle\langle L_{x,y}^{0}\sqrt{f},\sqrt{f} \rangle_{\nu_{\rho(\cdot)^{N}}}
&=&\dfrac{1}{2}\int p(y-x) \left[ \sqrt{f(\eta^{xy})}-\sqrt{f(\eta)}\right]\sqrt{f(\eta)}d\nu_{\rho(\cdot)}^{N}(\eta)\\
&-&\dfrac{1}{2}\int p(y-x) \left[ \sqrt{f(\eta^{xy})}-\sqrt{f(\eta)}\right]\sqrt{f(\eta^{xy})}d\nu_{\rho(\cdot)}^{N}(\eta)\\
&+& \dfrac{1}{2} \int p(y-x) \left[ \sqrt{f(\eta^{xy})}\right]^{2} \left[1-\theta^{xy}(\eta)\right] d\nu_{\rho(\cdot)}^{N}(\eta)\\
&=&-\dfrac{1}{2} \int p(y-x)  \left[ \sqrt{f(\eta^{xy})}-\sqrt{f(\eta)}\right]^{2}d\nu_{\rho(\cdot)}^{N}(\eta)\\
&+&\dfrac{1}{2}\int p(y-x) \left[ \sqrt{f(\eta^{xy})}\right]^{2} \left[1-\theta^{xy}(\eta)\right] d\nu_{\rho(\cdot)}^{N}(\eta).
\end{eqnarray*}
Thus we have that
\begin{eqnarray*}
\displaystyle \langle L_{N}^{0}\sqrt{f},\sqrt{f} \rangle_{\nu_{\rho(\cdot)}^{N}}&\leq& -\dfrac{1}{4}D_{N}^{0}(f)\\
&&+\dfrac{1}{4}\sum _{x,y \in \Lambda_{N}} p(x-y)\int \left[ \sqrt{f(\eta^{xy})}\right]^{2} \left[1-\theta^{xy}(\eta)\right] d\nu_{\rho(\cdot)}^{N}(\eta). 
\end{eqnarray*}
The second term on the right hand side of the last expression is equal to 
\begin{eqnarray*}
&\dfrac{1}{8}\sum _{x,y \in \Lambda_{N}} \int p(x-y) \left[ \sqrt{f(\eta^{xy})}\right]^{2} \left[1-\theta^{xy}(\eta)\right] d\nu_{\rho(\cdot)}^{N}(\eta)\\
&+\dfrac{1}{8}\sum _{x,y \in \Lambda_{N}} \int p(x-y) \left[ \sqrt{f(\eta)}\right]^{2} \left[\theta^{xy}(\eta)-1\right] d\nu_{\rho(\cdot)}^{N}(\eta),
\end{eqnarray*}
so using that for any $\ve>0$, $ab\leq \tfrac{\ve^2 a^{2}}{2} + \tfrac{b^{2}}{2\ve^2}$ and writing $[\sqrt{f (\eta^{xy})}]^2 - [\sqrt {f (\eta)}]^2 = [\sqrt {f (\eta^{xy}}) -\sqrt {f (\eta)} ][\sqrt {f (\eta^{xy})} +\sqrt {f (\eta)}]$ the last expression is absolutely bounded by
\begin{eqnarray*}
&&C \ve^2  \sum _{x,y \in \Lambda_{N}} p(x-y)\int \left[ \sqrt{f(\eta^{xy})}- \sqrt{f(\eta)}\right]^{2} d\nu_{\rho(\cdot)}^{N}(\eta)\\
&+&{C} \ve^{-2} \sum _{x,y \in \Lambda_{N}} p(x-y)\int \left[ \sqrt{f(\eta)}+ \sqrt{f(\eta^{xy})}\right]^{2} \left[1-\theta^{xy}(\eta)\right]^{2} d\nu_{\rho(\cdot)}^{N}(\eta)\\
&\leq & C \ve^2 D_{N}^{0}(f)+ C \ve^{-2} N^{-2}\sum _{y\ne x \in \Lambda_{N}}\dfrac{1}{|x-y|^{\gamma -1}},\\
&\leq & C\ve^2  D_{N}^{0} (f)+ C\ve^{-2}  N^{-\gamma +1}.
\end{eqnarray*}
We choose then $\ve>0$ sufficiently small to have $C'=1/4 - C\ve^2>0$. Then the first inequality in (\ref{eq:byron}) follows.

Now we prove only the second inequality in (\ref{eq:byron}) since the third one can be proved similarly. We have that 
$$\langle L_{N}^{\ell}\sqrt{f},\sqrt{f} \rangle_{\nu_{\rho(\cdot)}^{N}} = \sum _{\substack{y\leq 0,\\x \in \Lambda_{N}}} p(x-y)\langle L_{x}^{\ell}\sqrt{f},\sqrt{f} \rangle_{\nu_{\rho(\cdot)}^{N}}$$
where $\langle L_{x}^{\ell}\sqrt{f},\sqrt{f} \rangle_{\nu_{\rho(\cdot)}^{N}} = \int I_{\alpha}^{x}(\eta) \left[ \sqrt{f(\eta^{x})}-\sqrt{f(\eta)}\right]\sqrt{f(\eta)}d\nu_{\rho(\cdot)}^{N}(\eta)$ and $I_{\alpha}^x= [ \eta_x (1-\alpha) + (1-\eta_x) \alpha] $.
Thus, denoting $\theta^{x}(\eta)=\dfrac{d\nu^N_{\rho(\cdot)}(\eta^{x})}{d\nu_{\rho(\cdot)}^{N}(\eta)}$ we obtain the following
\begin{eqnarray*}
\displaystyle\langle L_{x}^{\ell}\sqrt{f},\sqrt{f} \rangle_{\nu_{\rho(\cdot)}^{N}}
&=&-\dfrac{1}{2}\int I_{\alpha}^{x}(\eta) \left[ \sqrt{f(\eta^{x})}-\sqrt{f(\eta)}\right]^{2}d\nu_{\rho(\cdot)}^{N}(\eta)\\
&+&\dfrac{1}{2}\int \left[ \sqrt{f(\eta^{x})}\right]^{2} \left[I_{\alpha}^{x}(\eta)-I_{\alpha}^{x}(\eta^{x})\theta^{x}(\eta)\right] d\nu_{\rho(\cdot)}^{N}(\eta).\\
\end{eqnarray*}
Performing a change of variables we have that the second term on the right hand side of the last expression can be written as
\begin{eqnarray*}
&&\dfrac{1}{4}\int \left[ \sqrt{f(\eta^{x})}\right]^{2} \left[I_{\alpha}^{x}(\eta)-I_{\alpha}^{x}(\eta^{x})\theta^{x}(\eta)\right] d\nu_{\rho(\cdot)}^{N}(\eta)\\
&-&\dfrac{1}{4}\int \left[ \sqrt{f(\eta)}\right]^{2} \left[I_{\alpha}^{x}(\eta)-I_{\alpha}^{x}(\eta^{x})\theta^{x}(\eta)\right] d\nu_{\rho(\cdot)}^{N}(\eta)\\
&=&\dfrac{1}{4}\int \left(\left[ \sqrt{f(\eta^{x})}\right]^{2}-\left[ \sqrt{f(\eta)}\right]^{2}\right) \left[I_{\alpha}^{x}(\eta)-I_{\alpha}^{x}(\eta^{x})\theta^{x}(\eta^{x})\right] d\nu_{\rho(\cdot)}^{N}(\eta).
\end{eqnarray*}
Using again the inequality $ab\le \tfrac{\ve^2 a^{2}}{2} + \tfrac{b^{2}}{2\ve^2}$, $\ve>0$, the integral above is absolutely bounded by
\begin{eqnarray*}
&& C\ve^2 \int I_{\alpha}^{x}(\eta)\left(\left[ \sqrt{f(\eta^{x})}\right]-\left[ \sqrt{f(\eta)}\right]\right)^{2}  d\nu_{\rho(\cdot)}^{N}(\eta)\\
&+& C\ve^{-2} \int \dfrac{1}{I_{\alpha}^{x}(\eta)}\left[I_{\alpha}^{x}(\eta)-I_{\alpha}^{x}(\eta^{x})\theta^{x}(\eta)\right]^{2}\left(\left[ \sqrt{f(\eta^{x})}\right]+\left[ \sqrt{f(\eta)}\right]\right)^{2}  d\nu_{\rho(\cdot)}^{N}(\eta)\\
&\leq & C\ve^2 \int I_{\alpha}^{x}(\eta)\left(\left[ \sqrt{f(\eta^{x})}\right]-\left[ \sqrt{f(\eta)}\right]\right)^{2}  d\nu_{\rho(\cdot)}^{N}(\eta)\\
&+& 2 C\ve^{-2} \int \dfrac{1}{I_{\alpha}^{x}(\eta)}\left[I_{\alpha}^{x}(\eta)-I_{\alpha}^{x}(\eta^{x})\theta^{x}(\eta)\right]^{2}\left( f(\eta^{x})+ f(\eta)\right)  d\nu_{\rho(\cdot)}^{N}(\eta).\\
\end{eqnarray*}
Now, by the smoothness of $\rho$ and the fact that $\rho (0)=\alpha$ we have that 
$$\dfrac{1}{I_{\alpha}^{x}(\eta)}\left[I_{\alpha}^{x}(\eta)-I_{\alpha}^{x}(\eta^{x})\theta^{x}(\eta)\right]^{2} \leq C \dfrac{x^{2}}{N^{2}}.$$
Thus, by using the fact that $f$ is a density and that $\theta_x$ is uniformly bounded, and by choosing $\ve>0$ sufficiently small, we get 
\begin{eqnarray*}
\langle L_{N}^{\ell}\sqrt{f},\sqrt{f} \rangle_{\nu_{\rho(\cdot)}} &\leq&-C^\prime  D_{N}^{\ell} (f)+\dfrac{C}{N^{2}}\sum_{\substack{y\leq 0,\\x \in \Lambda_{N}}}\dfrac{x^{2}}{[x-y]^{\gamma + 1}}\\
& \le &-C^\prime  D_{N}^{\ell}(f) + \mathcal{O}(N^{1-\gamma})
\end{eqnarray*}
which proves the second inequality in (\ref{eq:byron}).
\end{proof}


\end{document}